\documentclass[12pt]{amsart}
\usepackage{amssymb}
\newcommand{\K}{\mathbb{C}}

\newcommand{\g}{\mathfrak{g}}

\newcommand{\Z}{\mathbb{Z}}

\newcommand{\Dcal}{\mathcal{D}}
\newcommand{\Str}{\mathcal{O}}
\newcommand{\h}{\mathfrak{h}}

\newcommand\Ext{\operatorname{Ext}}

\newcommand\GL{\operatorname{GL}}

\newcommand\m{\mathfrak{m}}

\newcommand\End{\operatorname{End}}

\newcommand\Hom{\operatorname{Hom}}

\newcommand\quo{/\!/}

\newcommand\SL{\operatorname{SL}}
\newcommand\z{\mathfrak{z}}
\newcommand\W{\mathbb{A}}

\newcommand\Aa{\mathbb{A}}

\newcommand\param{\mathfrak{c}}
\newcommand\Halg{\mathbf{H}}
\newcommand\Pro{\mathcal{P}}

\newcommand\ZZ{\mathbb{Z}}

\newcommand{\HH}{\operatorname{HH}}
\newcommand{\Drm}{\operatorname{D}}
\newcommand{\Fi}{\mathbb{F}}
\newcommand{\Az}{\mathrm{O}}
\newtheorem{Thm}{Theorem}[section]
\newtheorem{Prop}[Thm]{Proposition}

\newtheorem{Lem}[Thm]{Lemma}
\theoremstyle{definition}

\newtheorem{Rem}[Thm]{Remark}

\numberwithin{equation}{section}
\oddsidemargin=0cm
\evensidemargin=0cm
\textwidth=16cm
\textheight=232mm
\unitlength=1mm
\author{Ivan Losev}
\title{On Procesi bundles}
\address{Department
of Mathematics, Northeastern University, Boston MA 02115 USA}
\email{i.loseu@neu.edu}
\thanks{MSC 2010: primary 14E16; secondary 16S35, 53D20, 53D55}
\begin{document}
\begin{abstract}
Procesi bundles are certain vector bundles  on symplectic resolutions of  quotient singularities
for wreath-products of the symmetric groups with the Kleinian groups. Roughly speaking, we can define Procesi bundles as bundles on
resolutions that provide derived McKay equivalence. In this paper we classify Procesi bundles on resolutions obtained
by Hamiltonian reduction and relate the Procesi bundles to the tautological bundles on the resolutions.
Our proofs are based on  deformation arguments and a connection of Procesi bundles with symplectic reflection
algebras.
\end{abstract}
\maketitle
\section{Introduction}

\subsection{Procesi bundles}
Let $\Gamma_1\subset \SL_2(\K)$ be a finite subgroup (a so called Kleinian group).
Such groups are in one-to-one correspondence with simply laced Dynkin diagrams.
Fix $n\geqslant 1$. Then we have the semidirect product $\Gamma_n=\mathfrak{S}_n\ltimes \Gamma_1^n$
that naturally acts on $V:=\K^{2n}$ by linear symplectomorphisms. So we can form the quotient
variety $X_0:=V/\Gamma_n$. This variety has an action of $\K^\times$ by dilations.

Now let $X$ be a $\K^\times$-equivariant resolution of singularities of $X_0$ that is symplectic
in the sense that there is a symplectic form on $X$ extending the form on the regular locus of $X_0$.
Let $\pi:X\rightarrow X_0=V/\Gamma_n$ denote the resolution morphism, and $\eta:V\rightarrow V/\Gamma_n$
be the quotient morphism. We remark that $\pi^*$ identifies $\K[X_0]=\K[V]^{\Gamma_n}$ with $\K[X]$.

Consider the algebra $\K[V]\#\Gamma_n$ that is a skew-group algebra for the action of $\Gamma_n$ on $\K[V]$.
We remark that the algebra $\K[X_0]=\K[V]^{\Gamma_n}$ is naturally included into $\K[V]\#\Gamma_n$
and coincides with the center of the latter. Also $\K[V]\#\Gamma_n$ is naturally graded with $V^*$
being of degree $1$ and $\Gamma_n$ being of degree $0$.

By a {\it Procesi bundle} on $X$ we mean a $\K^\times$-equivariant vector bundle $\Pro$ together with a $\K^\times$-equivariant identification $\End(\Pro)\xrightarrow{\sim} \K[V]\#\Gamma_n$ of $\K[X]=\K[V]^{\Gamma_n}$-algebras
such that $\Ext^i(\Pro,\Pro)=0$ for $i>0$. Two  Procesi bundles $\Pro^1,\Pro^2$ are said to be equivalent
if there is a $\K^\times$-equivariant isomorphism $\Pro^1\xrightarrow{\sim}\Pro^2$ of vector bundles on $X$ such that
the induced automorphism of $\K[V]\#\Gamma_n$ is inner (and from the degree consideration is the conjugation by
an invertible element of $\K\Gamma_n$).

We remark that an isomorphism $\End(\Pro)\xrightarrow{\sim}\K[V]\#\Gamma_n$ equips $\Pro$ with a fiberwise
action of $\Gamma_n$. It is easy to see that every fiber is a regular representation of $\Gamma_n$. So $\Pro^{\Gamma_n}$
is a line bundle. By a {\it normalized Procesi bundle} we mean a Procesi bundle $\Pro$ with $\Pro^{\Gamma_n}=\Str_X$.
If $\Pro$ is a Procesi bundle, then using the natural identification  $\End(\Pro)\cong \End(\mathcal{L}\otimes \Pro)$,
where $\mathcal{L}$ is a line bundle, we equip $\mathcal{L}\otimes \Pro$ with the structure of a Procesi bundle.
So any Procesi bundle $\Pro$ can be normalized by tensoring it with $(\Pro^{\Gamma_n})^{-1}$.

We remark that two non-equivalent Procesi bundles can be isomorphic as $\K^\times$-equivariant vector bundles.
Indeed, one can twist the fiberwise action of $\Gamma_n$ by tensoring it with a one-dimensional representation.
Note, however, that if $\Pro^1,\Pro^2$ are two normalized Procesi bundles, then any $\K^\times$-equivariant
isomorphism $\Pro^1\xrightarrow{\sim}\Pro^2$ will be an equivalence of Procesi bundles, we will check this rigorously below.


In the special case when $n=1$ the Procesi bundles are easy to produce. Namely, $X$ has to be the minimal resolution
of $X_0$. It is well-known that $X$ can be constructed as a moduli space of $\K[V]\#\Gamma_1$-modules
that are isomorphic to $\K\Gamma_1$ as $\Gamma_1$-modules and are subject to a (general enough) stability condition. Because
of this realization,  $X$ comes
equipped with  a tautological bundle of rank $|\Gamma_1|$ and this bundle is a Procesi bundle in the sense
explained above, as was checked in \cite{KV}. We remark that the number of non-equivalent stability conditions
equals $|W|$, where $W$ is the Weyl group of the Dynkin diagram corresponding to $\Gamma_1$.

Outside of the $n=1$ case, the Procesi bundles are hard to construct. The first known case was
for $\Gamma_1=\{1\}$, where $X$ is the Hilbert scheme $\operatorname{Hilb}_n$ of
$n$ points on $\K^2$. There one can also consider the tautological bundle $\mathcal{T}$ but its
rank is $n$ instead of $n!$. A Procesi bundle $\mathcal{P}$ on $X$ was constructed by Haiman in
\cite{Haiman} and was used to prove the Macdonald positivity conjecture. This bundle $\mathcal{P}$
is still related to $\mathcal{T}$: namely, $\mathcal{P}^{\mathfrak{S}_{n-1}}\cong \mathcal{T}$.
An alternative construction of $\mathcal{P}$ was later given by Ginzburg, \cite{Ginzburg_Pro}.
Let us remark that $\Pro^*$ is also a Procesi bundle (thanks to the isomorphism of
$\K[V]\#\Gamma_n$ with its opposite given by the identity map on $V$ and the inversion on $\Gamma_n$).
It is easy to see that $\Pro$ is not equivalent to $\Pro^*$ and so we, at least, have two different
Procesi bundles on $X=\operatorname{Hilb}_n$.

In \cite{BK}, Bezrukavnikov and Kaledin proved that a Procesi bundle exists on any $X$.
We will describe all Procesi bundles on  $X$ provided $X$ is obtained by Hamiltonian reduction
as explained in the next subsection. Also we confirm a part of \cite[Conjecture 7.2.13]{Haiman_CDM}
 and investigate some other properties of Procesi bundles. Yet another property
was recently established by Bezrukavnikov and Finkelberg, \cite{BF}: they checked certain
triangularity properties of Procesi bundles in the case when $\Gamma_1$ is cyclic and generalized
the Macdonald positivity to that setting.

We also would like to point out that the notion of a Procesi bundle still makes sense if we replace
$\K$ with an algebra $R$ over a suitable algebraic extension of $\Z$, compare with \cite{BK}.

\subsection{Resolutions via Hamiltonian reduction}\label{SS_Ham_res}
 One can construct a symplectic resolution $X$ of $X_0:=V/\Gamma_n$ using Hamiltonian
reduction. In this paper we are going to deal with these resolutions, conjecturally
there is nothing else.

We consider the affine Dynkin quiver $Q$ corresponding to $\Gamma_1$ (we have a single
loop if $\Gamma_1=\{1\}$). Let $0$ denote the extending vertex and $\delta$ be the indecomposable
imaginary root. Consider the dimension vector $v:=n\delta$ and the framing vector $w:=\epsilon_0$
(the coordinate vector at the extending vertex). Then we can form the representation space
\begin{align*}U_0:=\operatorname{Rep}(Q,v,w)=&\bigoplus_{a\in Q_1} \Hom_{\K}(\K^{v_{t(a)}}, \K^{v_{h(a)}})\oplus
\bigoplus_{i\in Q_0}\Hom_{\K}(\K^{w_i}, \K^{v_i})\\& =\bigoplus_{a\in Q_1} \Hom_{\K}(\K^{n\delta_{t(a)}}, \K^{n\delta_{h(a)}})\oplus \K^n
\end{align*}
acted on by $G=\GL(n\delta):=\prod_{i\in Q_0}\GL(n\delta_i)$.
Set $U:=T^* U_0$ and consider a generic character $\theta$ of $G$ (``generic'' means that the action
of $G$ on $U^{ss}$ is free). The $G$-action is symplectic and so  we have a quadratic moment map $\mu:U\rightarrow \g$.
We have a $\K^\times$-equivariant identification $X_0\cong \mu^{-1}(0)\quo G$ (the actions
come from the dilation actions on $U,V$) of Poisson algebraic varieties. Also we have a $\K^\times$-equivariant
symplectic resolution $X^\theta:=\mu^{-1}(0)^{\theta-ss}/G\twoheadrightarrow X_0$.  This resolution
comes equipped with a tautological bundle $\mathcal{T}^\theta$: it is obtained from the equivariant
bundle with fiber $\bigoplus_{i\in Q_0} \K^{n\delta_i}\otimes \K^{\delta_i*}$ (where $G$ acts on the first
factor) on $V$ by descent.

When $n=1$, we get the minimal resolution of the Kleinian singularity, while for $\Gamma_1$ we get the Hilbert
scheme.

Let us explain what values of $\theta$ are generic. We can embed the character group $\operatorname{Hom}(G,\K^\times)$
into the dual $\h^*$ of the Cartan subalgebra of the Kac-Moody algebra $\g(Q)$ associated to the quiver $Q$: we identify
the character $\det_i$ of taking $\det$ of the $i$th component with the fundamental weight $\omega_i$. Then the non-generic
values of $\theta$ are precisely those that vanish on a positive root $\alpha<n\delta$. A connected component
of the complement of these hyperplanes in $\h^*(\mathbb{R})$ will be called a chamber.

Of course, the resolutions corresponding to stability conditions in the same chamber are the same. But also different
chambers may give rise to the same resolution. Namely, let $W$ denote the Weyl group of the Dynkin diagram
of $\Gamma_1$. Then $W$ naturally acts on $\h^*$. Also the group $\ZZ/2\ZZ$ acts on $\h^*$: it fixes all finite roots
and sends $\delta$ to $-\delta$. The resolutions corresponding to $W$-conjugate stability conditions $\theta$
are the same, see \cite{Maffei}. This also should be true for the $\ZZ/2\ZZ$-action but we do not check this.
Finally, the resolutions corresponding to $\theta$'s from non-conjugate chambers should be different
(as schemes over $X_0$) but we do not check that either (also this should follow from the classification of
the Procesi bundles).

\subsection{Main results}
 We write $X^\theta$ for the resolution of $X_0$ obtained by Hamiltonian reduction
with stability condition $\theta$. 

The main result of this paper is as follows.

\begin{Thm}\label{Thm:main1}
There are precisely $2|W|$ non-equivalent normalized Procesi bundles on $X^\theta$.
\end{Thm}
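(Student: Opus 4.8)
The plan is to translate the classification into a statement about quantizations and symplectic reflection algebras, where rigidity makes the count accessible. First I would show that any normalized Procesi bundle $\Pro$ deforms canonically over the base of quantization parameters. Since $\Ext^i(\Pro,\Pro)=0$ for $i>0$ and $\End(\Pro)\cong\K[V]\#\Gamma_n$ is fixed, the pair consisting of $\Pro$ together with the identification $\End(\Pro)\xrightarrow{\sim}\K[V]\#\Gamma_n$ carries no higher obstructions, so $\Pro$ lifts to a \emph{quantized Procesi bundle} $\tilde\Pro_\lambda$ for a filtered deformation $\A_\lambda$ of $\Str_X$. Here $\A_\lambda$ ranges over the filtered quantizations of $X^\theta$, classified by $\lambda\in H^2(X^\theta,\K)\cong\h^*$. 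The endomorphism sheaf $\SEnd(\tilde\Pro_\lambda)$ then has global sections a symplectic reflection algebra $\Halg_{c}$ for $\Gamma_n$, with spherical subalgebra $e\Halg_{c}e\cong\Gamma(X^\theta,\A_\lambda)$ ($e$ the averaging idempotent of $\K\Gamma_n$), and its parameter $c=c(\lambda)$ depends affine-linearly on $\lambda$; thus $\tilde\Pro_\lambda$ realizes a Morita equivalence between $\A_\lambda$ and $\Halg_{c(\lambda)}$.

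This assigns to each normalized Procesi bundle the affine-linear map $\lambda\mapsto c(\lambda)$, and the construction is reversible: taking the associated graded, i.e. the classical limit $\lambda\to 0$, of the quantized datum recovers $\Pro$ together with its Procesi structure, because the deformation was canonical. Hence the set of normalized Procesi bundles injects into the set of admissible parameter maps $c(\lambda)$. I would next pin down the linear part of $c(\lambda)$: comparing the quantization period of $\A_\lambda$ with the symplectic reflection parameter through the localization isomorphism $e\Halg_c e\cong\Gamma(\A_\lambda)$, the linear part is forced to be an isomorphism onto $\param$, determined up to the relevant discrete symmetries, so only discrete data remain.

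The count then comes from two commuting symmetries acting on parameter maps, hence on Procesi bundles. The Weyl group $W$ of the finite Dynkin diagram of $\Gamma_1$ acts through isomorphisms of symplectic reflection algebras, equivalently through Maffei's identifications of $X^\theta$ for $W$-conjugate $\theta$, permuting the chambers and thus the admissible linear parts; this contributes a factor $|W|$. The group $\Z/2\Z$ acts through the duality $\Pro\mapsto\Pro^*$ coming from the isomorphism of $\K[V]\#\Gamma_n$ with its opposite (identity on $V$, inversion on $\Gamma_n$), which on parameters is the involution sending $\delta\mapsto-\delta$ and $c\mapsto -c$; this contributes the factor $2$. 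Since the two actions commute, together they form $W\times\Z/2\Z$ of order $2|W|$. I would show this action is free on the admissible data and that a single orbit exhausts them, giving the upper bound $2|W|$. For the matching lower bound, I would start from one Procesi bundle (existing by Bezrukavnikov--Kaledin) and apply the two group actions to produce $2|W|$ pairwise non-equivalent bundles, checking non-equivalence via the distinct maps $c(\lambda)$ they induce.

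The main obstacle is the rigidity step: proving that a normalized Procesi bundle is determined by its quantized datum and that the admissible data form a single $(W\times\Z/2\Z)$-orbit of size $2|W|$. Concretely, this requires (i) the unobstructedness and canonicity of the quantization of $\Pro$, so that the classical limit is faithful, and (ii) precise control of the parameter map $c(\lambda)$, both its linear part and the residual discrete ambiguity, via the localization theorem for symplectic reflection algebras on $X^\theta$. Establishing (ii), namely that no Procesi bundles arise beyond those produced by the two symmetries, is where the deformation argument and the comparison of the two parameter spaces $\h^*$ and $\param$ must be made rigorous.
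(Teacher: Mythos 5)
Your upper-bound strategy is essentially the paper's: attach to each normalized Procesi bundle a graded deformation of $\End(\Pro)\cong S(V)\#\Gamma_n$ coming from the quantization $\Dcal$ of $X^\theta$, invoke universality of the symplectic reflection algebra to get a classifying map $\nu_{\Pro}\colon\param_{univ}\to\param_{red}$, and quote rigidity ($\nu$ is an isomorphism, unique up to $W\times\Z/2\Z$, as in Lemma \ref{Lem:param_map}). But your key injectivity step --- ``the construction is reversible: taking the classical limit recovers $\Pro$, because the deformation was canonical'' --- has the implication pointing the wrong way. Canonicity says that $\Pro$ determines its quantized deformation; what you need is that the parameter map \emph{alone} determines $\Pro$, and nothing in your sketch excludes two non-equivalent normalized bundles inducing the same $c(\lambda)$. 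This is precisely where the paper has to do genuine geometric work (Proposition \ref{Prop:Pro_coinc}): it specializes to a \emph{commutative} one-parameter deformation $\tilde{X}$ along a generic $\alpha\in\z^*$, observes that the fibers over $\K\setminus\{0\}$ are affine, so $\tilde{\pi}\colon\tilde{X}\to\tilde{X}_0$ is an isomorphism outside a locus of codimension $2$, identifies $\Gamma(\Pro^i_\alpha)$ with the specialization of $\Halg e$ determined by $\nu_{\Pro^i}$, and only then concludes that equal global-section modules force isomorphic bundles, since vector bundles agreeing off codimension $2$ agree everywhere. On $X$ itself the exceptional locus has codimension $1$ --- which is exactly why distinct Procesi bundles exist at all --- so some such deformation-to-codimension-$2$ argument is unavoidable, and its absence is a genuine gap in your proposal.

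For the lower bound you take a route different from the paper's: you would generate the $2|W|$ bundles as the $W\times\Z/2\Z$-orbit of a single Bezrukavnikov--Kaledin bundle, using Maffei's identifications $X^\theta\cong X^{w\theta}$ and the duality $\Pro\mapsto\Pro^*$. The paper instead constructs $2|W|$ bundles directly in characteristic $p\gg 0$ from $2|W|$ quantizations $\Az_\lambda$ with $\Drm_\lambda\cong\W^{\Gamma_n}$ (using that the parameter with $k=c_i=0$ has trivial stabilizer in $W\times\Z/2\Z$), proves $\hat{\Az}_\lambda\not\cong\hat{\Az}_{\lambda'}$ by a $K_0$/Frobenius argument (a line-bundle twist of splitting bundles would force $\mu=p\mu'$ in the character lattice), and then lifts to characteristic $0$ while tracking $\nu_{\Pro}$. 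Your route is not unreasonable --- the paper's closing Remark records $\nu_{\sigma_{w*}\Pro}=w\nu_{\Pro}$ and $\nu_{\Pro^*}=w_0\sigma\nu_{\Pro}$ --- but these equalities are themselves nontrivial (the first rests on \cite{quant}, the second on the anti-automorphism of $\Halg$), and note that duality acts on parameters by $w_0\sigma$, not by the bare involution $\sigma$ as you state; only in combination with the $W$-action do your two symmetries sweep out the full orbit. You would also need freeness of the action on the admissible maps $\nu$, which the paper verifies from the explicit description of the $W\times\Z/2\Z$-action on $\z$.
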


In fact, this theorem remains the same even if we do not consider $\K^\times$-equivariant structures, see Remark \ref{Rem:equi}
below.

\begin{Thm}\label{Thm:main2}
For any generic $\theta$ there is a normalized Procesi bundle $\Pro$ on $X^{\theta}$
depending on the chamber of $\theta$ with a property that $\Pro^{\Gamma_{n-1}}=\mathcal{T}^\theta$.
\end{Thm}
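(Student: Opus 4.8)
The plan is to transport the desired identity to the representation theory of symplectic reflection algebras after deforming $X^\theta$, and then to descend the resulting isomorphism back to $X^\theta$ by a rigidity argument. First I would fix a normalized Procesi bundle $\Pro$ on $X^\theta$; one exists by \cite{BK}. From $\Pro^{\Gamma_n}=\Str_{X^\theta}$ and $\End(\Pro)=\K[V]\#\Gamma_n$ one checks that $\Gamma(X^\theta,\Pro)=\K[V]$ as a $(\K[V]\#\Gamma_n)$-module, so that $\Pro^{\Gamma_{n-1}}$ is a vector bundle of rank $[\Gamma_n:\Gamma_{n-1}]=n|\Gamma_1|$, the same rank as $\mathcal{T}^\theta$. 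The content of the theorem is the stronger assertion that, for the bundle attached to the chamber of $\theta$, these two are isomorphic as vector bundles. Since $X^\theta$ is not affine its global sections do not determine a bundle, so the comparison cannot be made directly on $X^\theta$ and I pass to a deformation.

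Next I would set up the symplectic-reflection-algebra framework. Let $\Halg_\param$ be the symplectic reflection algebra of $(\Gamma_n,V)$ with parameter $\param$ matched to the chamber of $\theta$, let $e$ and $e_{n-1}$ be the symmetrizing idempotents for $\Gamma_n$ and for $\Gamma_{n-1}\subset\Gamma_n$, and let $A_\param=e\Halg_\param e$ be the spherical subalgebra, a filtered quantization of $\K[X_0]$. For generic quantization parameter $\lambda$ there is a quantization $\A_\lambda$ of $\Str_{X^\theta}$ with $\Gamma(X^\theta,\A_\lambda)=A_\param$ for which abelian localization holds, so that global sections give an equivalence between $\A_\lambda$-modules and $A_\param$-modules, while the classical limit of a good $\A_\lambda$-module is a coherent sheaf on $X^\theta$. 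In this framework the Procesi bundle quantizes to an $\A_\lambda$-module $\widetilde\Pro_\lambda$ with $\End(\widetilde\Pro_\lambda)=\Halg_\param$ and $\Gamma(\widetilde\Pro_\lambda)=\Halg_\param e$, and the tautological bundle quantizes to $\widetilde{\mathcal{T}}_\lambda$. Taking $\Gamma_{n-1}$-invariants gives $\Gamma(\widetilde\Pro_\lambda^{\Gamma_{n-1}})=e_{n-1}\Halg_\param e$, so it is enough to identify this with the tautological module $\Gamma(\widetilde{\mathcal{T}}_\lambda)$ as $A_\param$-modules.

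The crux of the argument, and the step I expect to be the main obstacle, is this identification of $A_\param$-modules $e_{n-1}\Halg_\param e\cong\Gamma(\widetilde{\mathcal{T}}_\lambda)$. Here I would use the realization of $A_\param$ as the quantum Hamiltonian reduction of $\Dcal(U_0)$ by $G=\GL(n\delta)$ coming from the quiver construction of $X^\theta$; under this isomorphism $\widetilde{\mathcal{T}}_\lambda$ is the reduction of the natural (tautological) $\Dcal(U_0)$-module attached to $\bigoplus_{i\in Q_0}\K^{n\delta_i}\otimes\K^{\delta_i*}$, and I must match it with the bimodule $e_{n-1}\Halg_\param e$ produced by the embedding $\Gamma_{n-1}\subset\Gamma_n$. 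I expect this to follow from a restriction/parabolic-induction compatibility for symplectic reflection algebras together with a comparison of associated graded modules; this is precisely the point at which the chamber of $\theta$, equivalently the choice among the $2|W|$ Procesi bundles, is pinned down, since only for the correct chamber do the two filtrations agree.

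Finally I would specialize $\lambda\to 0$. Via the abelian localization equivalence, the module isomorphism of the previous step lifts to an isomorphism $\widetilde\Pro_\lambda^{\Gamma_{n-1}}\cong\widetilde{\mathcal{T}}_\lambda$ of $\A_\lambda$-modules, and passing to associated graded objects returns an isomorphism $\Pro^{\Gamma_{n-1}}\cong\mathcal{T}^\theta$ of coherent sheaves on $X^\theta$. The vanishing $\Ext^{>0}(\Pro,\Pro)=0$ guarantees that the quantizations and their $\Gamma_{n-1}$-invariants are flat deformations, so that the filtered isomorphism indeed induces an isomorphism on associated graded and no degeneration occurs in the classical limit. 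The Procesi bundle $\Pro$ obtained this way, attached to the chamber of $\theta$, is the required normalized Procesi bundle with $\Pro^{\Gamma_{n-1}}=\mathcal{T}^\theta$.
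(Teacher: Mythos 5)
Your proposal correctly identifies the overall shape of the paper's argument (pass to a deformation of $X^\theta$ where the comparison can be made, then descend), and it even correctly flags the decisive step, but at exactly that step it contains a genuine gap: the identification $e_{n-1}\Halg e\cong \Gamma(\widetilde{\mathcal{T}}_\lambda)$ is left as something you ``expect to follow from a restriction/parabolic-induction compatibility,'' with no actual argument. That expectation is not how the identification is made, and nothing in the theory of parabolic induction for symplectic reflection algebras hands it to you. The paper's mechanism is concrete and different: it works with the \emph{commutative} one-parameter Poisson deformation $\tilde{X}$ over $\mathbb{A}^1$ (generic $\alpha\in\z^*$, so that $\tilde{\pi}$ is an isomorphism off a codimension-$2$ locus), and identifies $\tilde{\Pro}^{\Gamma_{n-1}}$ with the tautological bundle via the Etingof--Ginzburg construction: the elements $x_n,y_n\in\Halg$ act fiberwise on $\tilde{\Pro}$; a trivialization $\tilde{\Pro}^{\Gamma_{n-1}}_x\cong(\K\Gamma_n)^{\Gamma_{n-1}}$ presents them as matrices $A,B$, the image of $1$ gives $i$, and by \cite[Lemma 11.15]{EG} there is a \emph{unique} $j$ with $(A,B,i,j)\in\mu^{-1}(z)$ --- this uniqueness is precisely where the specific choice of $\nu_{\Pro}$ (the inverse of the map (\ref{eq:map})), i.e.\ the dependence on the chamber, enters. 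This produces a morphism $\iota:\tilde{X}^{reg}\rightarrow\tilde{X}$ with $\tilde{\Pro}^{\Gamma_{n-1}}\cong\iota^*\tilde{\mathcal{T}}$ essentially by construction; a separate rigidity lemma (any $\K^\times$-equivariant automorphism of $\tilde{X}_0$ over $\K$ that is the identity on the zero fiber is $\exp(z^k\xi)$ with $\xi$ of degree $-2k$, and $V$ carries no nonzero vector fields of degree $\leqslant -2$) shows $\iota$ is the identity inclusion, and the codimension-$2$ bound extends the isomorphism over all of $\tilde{X}$, whence to the zero fiber. None of this --- the $(A,B,i,j)$ quadruple, the uniqueness of $j$, the rigidity lemma --- appears in your proposal, and it is the entire content of the theorem.

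Two further points would need repair even granting your crux step. First, your specialization $\lambda\to 0$ is not justified: abelian localization holds only for generic $\lambda$, and an isomorphism of $A_\param$-modules at one such $\lambda$ does not by itself specialize to $\lambda=0$; one needs a flat family over the parameter line, which is exactly what the paper's $\K^\times$-equivariant bundle $\tilde{\Pro}$ on $\tilde{X}\rightarrow\mathbb{A}^1$ supplies (and the extension of the isomorphism across the codimension-$2$ locus is an isomorphism of bundles everywhere, since its determinant is invertible off codimension $2$, so restriction to the zero fiber causes no degeneration). Second, the statement is sensitive to the sign of $\theta\cdot\delta$: the paper must show $\mu^{-1}(0)^{ss}\subset\Lambda_+$ when $\theta\cdot\delta>0$ and run a dual construction (taking $1$ for $j$, using self-duality of $\K\Gamma_n$) when $\theta\cdot\delta<0$; your framework has no mechanism that sees this dichotomy, which is another symptom that the chamber-dependence has not actually been pinned down.
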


Let us explain the main ideas of the  proof.
We write $V^{reg}$ for the open subset of $V$
consisting of all points with trivial stabilizer in $\Gamma_n$. Then set $X_0^{reg}:=V^{reg}/\Gamma_n$
and $X^{reg}:=\pi^{-1}(X_0^{reg})$. Clearly, $\pi: X^{reg}\rightarrow X_0^{reg}$ is an isomorphism.
Now let $\Pro$ be a Procesi bundle. We claim that $\Pro|_{X^{reg}}=\eta_*(\Str_{V^{reg}})$.
Indeed,  \begin{align*} H^0(X,\Pro)=H^0(X, \mathcal{E}nd(\Pro)e)=H^0(X,\mathcal{E}nd(\Pro))e=(\K[V]\#\Gamma_n)e=\K[V],\end{align*} where $e:=\frac{1}{|\Gamma_n|}\sum_{\gamma\in \Gamma_n}\gamma$, and then we can just restrict to $X^{reg}$.
The reason for the existence of different Procesi
bundles is that $\operatorname{codim}_{X} X\setminus X^{reg}=1$. We are going to partially fix this by considering certain deformations
$\tilde{X},\tilde{X}_0$ of $X,X_0$, respectively, over the affine line $\mathbb{A}^1$.
These deformations will also be obtained by Hamiltonian reduction. An important property is that
the corresponding resolution of singularities $\tilde{\pi}:\tilde{X}\rightarrow \tilde{X}_0$
will be an isomorphism outside $\tilde{X}\setminus \tilde{X}^{reg}$ that now has codimension $2$. Since the higher
self-extensions of any $\mathcal{P}$ vanish and thanks to the $\K^\times$-equivariance,
the bundle $\Pro$ extends to a vector bundle
$\tilde{\Pro}$ on $\tilde{X}$. We will see that there are no more than $2|W|$ possibilities for the $\K[\tilde{X}_0]$-module
$H^0(\tilde{X},\tilde{\Pro})$. This was basically established in \cite{quant}. Since
$\tilde{\pi}$ is an isomorphism outside of a subvariety of codimension $2$, we see that if $H^0(\tilde{X},\tilde{\Pro}_1)\cong
H^0(\tilde{X},\tilde{\Pro}_2)$, then $\tilde{\Pro}_1\cong \tilde{\Pro}_2$. On the other hand, as we will see,
the construction in \cite{BK} yields $2|W|$ non-isomorphic Procesi bundles. This basically completes the
proof of Theorem \ref{Thm:main1}. In order to prove Theorem \ref{Thm:main2},
we will use similar ideas together with an argument due to Etingof and
Ginzburg, \cite{EG},  that shows that a general fiber of $\tilde{X}$ is the spectrum of the spherical
subalgebra in a suitable symplectic reflection algebra.

\subsection{Content  of the paper}
We prove Theorem \ref{Thm:main1} in Sections \ref{S_Pro_SRA},\ref{S_Pro_constr}.
In Section \ref{S_Pro_SRA} we prove that the number of different (normalized)
Procesi bundles does not exceed $2|W|$. The proof is based on a connection
between the Procesi bundles and symplectic reflection algebras (SRA) observed in
\cite{quant} and the deformation idea explained above. In Section \ref{S_Pro_constr}
we construct $2|W|$ different Procesi bundles, the construction is based on an
analysis of the approach to Procesi bundles from \cite{BK}. In Section
\ref{S_Pro_prop} we investigate properties of Procesi bundles, in particular,
proving Theorem \ref{Thm:main2}. We also describe the behavior of Procesi bundles
under parabolic restrictions. This will be used in our subsequent paper.

{\bf Acknowledgements}. I would have gotten nowhere in this project without numerous
conversations  with R. Bezrukavnikov. I would like to thank him and  M. Finkelberg, I. Gordon and M. Haiman
for stimulating discussions. Also I  am grateful to I. Gordon for remarks on a previous version of this text. This work
was supported by the NSF under Grants DMS-0900907, DMS-1161584.

\section{Procesi bundles vs symplectic reflection algebras}\label{S_Pro_SRA}
\subsection{Equivalence of normalized Procesi bundles}
Recall that we write $\Gamma_n$ for $\mathfrak{S}_n\ltimes \Gamma_1^n$ and $V$ for $\K^{2n}$.
Further, $X_0$ denotes the quotient $V/\Gamma_n$ and $X^\theta$ is its $\K^\times$-equivariant
symplectic resolution obtained by Hamiltonian reduction.

The goal of this subsection is to prove that, for  normalized Procesi bundles, a $\K^\times$-equivariant isomorphism
is the same as an equivalence. In other words, if $\Pro$ is a normalized Procesi bundle, then there is a unique
$\K^\times$-equivariant isomorphism $\End(\Pro)\xrightarrow{\sim} \K[\K^{2n}]\#\Gamma_n$ (modulo inner automorphisms). We write $V$ for
$\K^{2n}$ and identify $V$ with $V^*$ by means of the symplectic form. This allows us to identify $\K[V]=S(V^*)$ with $S(V)$.

\begin{Lem}\label{Lem:wreath_aut}
The group of outer graded automorphisms of the $S(V)^{\Gamma_n}$-algebra
$S(V)\#\Gamma_n$ is isomorphic to the character
group of $\Gamma_n$.
\end{Lem}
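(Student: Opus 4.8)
The plan is to exhibit an explicit homomorphism from the character group into the outer automorphism group and then prove it is bijective. Write $\A:=S(V)\#\Gamma_n$, $R:=S(V)^{\Gamma_n}=Z(\A)$, and $e:=\frac{1}{|\Gamma_n|}\sum_{\gamma\in\Gamma_n}\gamma$. For $\chi\in\Hom(\Gamma_n,\K^\times)$ define $\phi_\chi$ by $\phi_\chi(v)=v$ for $v\in V$ and $\phi_\chi(\gamma)=\chi(\gamma)\gamma$ for $\gamma\in\Gamma_n$. First I would check that $\phi_\chi$ respects the defining relation $\gamma v\gamma^{-1}=\gamma(v)$ and the commutativity of $V$, so that it is a well-defined graded automorphism; since it is the identity on $S(V)\supset R$, it is an $R$-algebra automorphism. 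The assignment $\chi\mapsto\phi_\chi$ is visibly a homomorphism $\Hom(\Gamma_n,\K^\times)\to\Aut_{gr,R}(\A)$.

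Next I would pin down the graded inner automorphisms. If $\phi=\mathrm{conj}_u$ is graded with $u\in\A^\times$, then its degree-$0$ component $u_0$ is a unit of $\A_0=\K\Gamma_n$ (compare degree-$0$ parts of $uu^{-1}=1$); replacing $\phi$ by $\mathrm{conj}_{u_0}^{-1}\circ\phi$, I may assume the conjugator is $w=1+w_{>0}$. Comparing the degree-$0$ part of $w\gamma=\phi(\gamma)w$ gives $\phi(\gamma)=\gamma$, and the degree-$1$ part of $wv=\phi(v)w$ gives $\phi(v)=v$; as $V$ and $\Gamma_n$ generate $\A$, this forces $\phi=\id$. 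Hence every graded inner automorphism is conjugation by an element of $(\K\Gamma_n)^\times$, which is the content of the parenthetical remark in the definition of equivalence. Injectivity of $\chi\mapsto[\phi_\chi]$ is then immediate: if $\phi_\chi=\mathrm{conj}_{u_0}$ with $u_0\in(\K\Gamma_n)^\times$, then $u_0$ centralizes $V$, and writing $u_0=\sum_\gamma c_\gamma\gamma$ and comparing coefficients in $u_0 v=vu_0$ shows that $c_\gamma\neq 0$ forces $\gamma$ to act trivially on $V$, hence $\gamma=1$ by faithfulness; so $u_0\in\K^\times$ and $\chi=1$.

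For surjectivity, let $\phi$ be an arbitrary graded $R$-algebra automorphism. The key first move is to use the idempotent $e$: it is primitive and central in $\K\Gamma_n$ (since $\gamma e=e\gamma=e$), and the idempotents that are simultaneously primitive and central are exactly the $e_\chi:=\frac{1}{|\Gamma_n|}\sum_\gamma\chi(\gamma)^{-1}\gamma$ attached to linear characters. Since $\phi_0:=\phi|_{\K\Gamma_n}$ preserves this set, $\phi(e)=e_\chi$ for some $\chi$, and a direct computation gives $\phi_\chi(e_\chi)=e$; thus, after replacing $\phi$ by $\phi_\chi\circ\phi$, I may assume $\phi(e)=e$. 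It then suffices to prove that such a $\phi$ is inner, for then the original $\phi$ lies in the image of $\chi\mapsto[\phi_\chi]$. I would pass to $K:=\operatorname{Frac}(R)$, where $\A_K\cong\mathrm{Mat}_{|\Gamma_n|}(K)$ because $\Gamma_n$ acts generically freely, so by Skolem--Noether $\phi_K=\mathrm{conj}_u$. The intertwiner space $I:=\{b\in\A:\ bx=\phi(x)b\ \text{for all }x\in\A\}$ is a graded reflexive $R$-module of rank one all of whose nonzero homogeneous elements are $K$-proportional to $u$, hence invertible in $\A_K$. It therefore suffices to produce a nonzero element of $I$ in degree $0$: such an element lies in $\K\Gamma_n$ and is a non-zero-divisor there, hence a unit of $\K\Gamma_n$, exhibiting $\phi$ as conjugation by an element of $(\K\Gamma_n)^\times$.

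The main obstacle is producing that degree-$0$ intertwiner, equivalently showing that, once $\phi(e)=e$, the permutation of $\operatorname{Irr}(\Gamma_n)$ induced by $\phi_0$ is trivial. I expect this to be the technical heart: the class of the reflexive module $I$ in $\operatorname{Cl}(R)=\operatorname{Cl}(X_0)$ must be shown to be trivial, and the difficulty is precisely that $X_0\setminus X_0^{reg}$ has codimension one, so the vanishing of this class cannot be read off the generic Azumaya picture alone. I would control it by combining the two pieces of rigidity already isolated—that $\phi$ fixes $R$ pointwise and that $e$ is fixed—with the explicit $\K\Gamma_n$-bimodule structure of $\A_1=V\otimes\K\Gamma_n$ and the faithfulness of $\Gamma_n$ on $V$, reducing the induced permutation of irreducibles to a symmetry of the McKay graph that is realized by tensoring with a character. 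Everything else in the argument is formal; this identification of the induced permutation is where the real work lies.
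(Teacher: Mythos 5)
Your first half is correct and essentially coincides with the paper's opening moves: the explicit automorphisms $\phi_\chi$ (the paper's $a_\chi$), the identification of graded inner automorphisms with conjugations by units of $\K\Gamma_n$, and the injectivity of $\chi\mapsto[\phi_\chi]$ are all fine.

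The genuine gap is in surjectivity, and you have flagged it yourself: after the normalization $\phi(e)=e$, everything hinges on showing that the intertwiner module $I_\phi$ has trivial class in $\operatorname{Cl}(R)$ (and in fact that a generator can be chosen in degree zero --- note that even a trivial class would not immediately give this, and fixing $e$ only controls the trivial isotypic component, not the full permutation of $\operatorname{Irr}(\Gamma_n)$). For this step you offer a plan ``reducing the induced permutation of irreducibles to a symmetry of the McKay graph'' together with the admission that this is where the real work lies; but that step \emph{is} the lemma, so the proposal is not a proof. Moreover, your diagnosis of the difficulty rests on a factual error: $X_0\setminus X_0^{reg}$ has codimension \emph{two} in $X_0$, not one, since symplectic reflections fix codimension-two subspaces of $V$, so $V\setminus V^{reg}$ has codimension two and so does its image under the finite map $\eta$. (The codimension-one phenomenon the paper emphasizes lives on the resolution $X$, where the relevant locus is a divisor; it is why Procesi bundles are non-unique, not an obstacle here.) This codimension-two fact is precisely what the paper exploits to close the argument without ever proving any individual automorphism inner: a graded automorphism $a$ is inner on an affine cover of $V^{reg}/\Gamma_n$ (where $S(V)\#\Gamma_n$ is a bundle of matrix algebras), the discrepancies $a_ia_j^{-1}$ of local conjugators form a $\K^\times$-valued cocycle, i.e.\ a line bundle, giving a homomorphism $\operatorname{Out}\rightarrow\operatorname{Pic}(V^{reg}/\Gamma_n)$; this map is injective because matching local conjugators glue, by codimension two, to a global invertible element of $S(V)\#\Gamma_n$; and $\operatorname{Pic}(V^{reg}/\Gamma_n)\cong\Hom(\Gamma_n,\K^\times)$ because $\operatorname{Pic}(V^{reg})$ is trivial, again by codimension two. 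Combined with your injection $\Hom(\Gamma_n,\K^\times)\hookrightarrow\operatorname{Out}$ and the finiteness of the character group, the two injections sandwich $\operatorname{Out}$ and force all maps to be bijections. Your $I_\phi$, restricted to $X_0^{reg}$, is exactly the line bundle of this cocycle construction; so the correct repair is not to trivialize its class after the $e$-normalization, but to show that $[\phi]\mapsto[I_\phi]$ is an injective group homomorphism into $\operatorname{Cl}(R)\cong\Hom(\Gamma_n,\K^\times)$ and then count.
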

\begin{proof}
First of all, given a character $\chi:\Gamma_n\rightarrow \K^\times$, let us construct an automorphism
$a_\chi$ of $S(V)\#\Gamma_n$. Namely, we set $a_\chi(v)=v, a_\chi(\gamma)=\chi(\gamma)\gamma$.
This extends to a unique (automatically graded) $S(V)^{\Gamma_n}$-linear automorphism of $S(V)\#\Gamma_n$.
To see this we note that $S(V)\#\Gamma_n$ is the quotient of the coproduct $S(V)*\K\Gamma_n$ by the
relations $\gamma v \gamma^{-1}=\gamma(v)$ that are preserved by $a_\chi$.

The assignment $\chi\mapsto a_\chi$ defines an injective map from $\Hom(\Gamma_n, \K^\times)$
to the group $\operatorname{Out}$ of outer graded $S(V)^{\Gamma_n}$-algebra automorphisms of $S(V)\#\Gamma_n$.

Now let $a$ be some graded $S(V)^{\Gamma_n}$-linear automorphism of $S(V)\#\Gamma_n$.
For $x\in V^{reg}/\Gamma_n$, the fiber $(S(V)\#\Gamma_n)_x$ is a matrix algebra. So there is an open $\K^\times$-stable affine
covering  $V^{reg}/\Gamma_n=\bigcup_i U_i$ such that the automorphism $a_\chi$ is inner on $(S(V)\#\Gamma_n)_{U_i}$,
say,  given by an invertible element $a_i$.
This defines a 1-cocycle $a_{ij}:=a_i a_j^{-1}$ on $V^{reg}/\Gamma_n$ with coefficients in $\K^\times$, in other words, a line bundle
on $V^{reg}/\Gamma_n$. So we have a map $\operatorname{Out}\rightarrow \operatorname{Pic}(V^{reg}/\Gamma_n)$ which is
a group homomorphism, by the construction. This homomorphism is injective. Indeed, if $a$ lies in the kernel,
then we can choose the elements $a_i$ that agree on the intersections. But since $\operatorname{codim}_{V}V\setminus V^{reg} \geqslant 2$,
we see that $a_i$ glue to an element of $S(V)\#\Gamma_n$ and so $a$ is inner.

It remains to show that $\operatorname{Pic}(V^{reg}/\Gamma_n)$ equals $\Hom(\Gamma_n, \K^\times)$.
This follows from the observation that, since $\operatorname{codim}_{V}V\setminus V^{reg} \geqslant 2$, the group
$\operatorname{Pic}(V^{reg})$ is trivial.
\end{proof}

So if there are isomorphisms $\End(\Pro)\xrightarrow{\sim} S(V)\#\Gamma_n$ (satisfying $\Pro^{\Gamma_n}=\Str_X$)
that differ by an outer automorphism, then there is a non-trivial one-dimensional representation of $\Gamma_n$
such that the corresponding component of $\Pro$ is the (equivariantly) trivial line bundle, let $e'$ be the
corresponding idempotent. We have an isomorphism $(e+e')\End(\Pro)(e+e')\cong (e+e')(S(V)\#\Gamma_n)(e+e')$ of graded algebras.
The left hand side is $\End((e+e')\Pro)\cong \operatorname{Mat}_2(S(V)^{\Gamma_n})$. The component of degree
$0$ is therefore 4-dimensional. However, the component of degree $0$ in $(e+e')(S(V)\#\Gamma_n)(e+e')$
is 2-dimensional. This contradiction completes the proof of the claim in the beginning of the subsection.

\subsection{Universality property for SRA}
Let  $\param_{\Halg}$ be a vector space with basis $c_i$, where $i$ runs over the index set for the conjugacy classes
of symplectic reflections in $\Gamma_n$, and $h$. For $n>1$, we have the following classes
of symplectic reflections: $S_0$ containing all transpositions and $S_1,\ldots,S_r$, containing elements from non-unit conjugacy classes
$S_1^0,\ldots,S_r^0$ in the $n$ copies of $\Gamma_1$ inside  $\Gamma_n$. Below we often write $k$ for $c_0$. When $n=1$,
the class $S_0$ is absent.

Following  \cite{EG}, define an algebra $\Halg$ as the quotient of $S(\param_\Halg)\otimes T(V)\#\Gamma$
by the relations
\begin{align}\label{SRA_rel1}
&[x_{(i)},y_{(j)}]=-\frac{k}{2}\sum_{\gamma\in \Gamma_1}\omega_1(\gamma x,y)s_{ij}\gamma_{(i)}\gamma_{(j)}^{-1},\\
&[x_{(i)},y_{(i)}]=h\omega_1(x,y)+\frac{k}{2}\sum_{j\neq i}\omega_1(x,y)s_{ij}\gamma_{(i)}\gamma_{(j)}^{-1}+
\sum_{\ell=1}^r c_\ell\omega_1(x,y)\sum_{\gamma\in S^0_\ell}\gamma_{(i)}.
\end{align}
Here the following notation is used. For $\gamma\in \Gamma_1$ we write $\gamma_{(i)}$ for the element $\gamma$
in the $i$th copy of $\Gamma_1$. For $x,y\in \K^2$ the notation $x_{(i)},y_{(j)}$ has a similar meaning,
where we view $V$ as $(\K^2)^{\oplus n}$. We write $\omega_1$ for the symplectic form on $\K^2$
so that the form on $V$ is $\omega_1^{\oplus n}$.

The algebra $\Halg$ is a graded flat deformation of $S(V)\#\Gamma_n$ over $S(\param_{\Halg})$.

Consider the Hochschild cohomology $\HH^i(S(V)\#\Gamma_n), i\geqslant 0$. Each of these cohomology groups
is graded, the grading is induced by that on $S(V)\#\Gamma_n$. The  group $\HH^i(S(V)\#\Gamma_n)$ vanishes
in degrees less than $-i$. This follows, for example, from \cite[Proposition 6.2]{GK}, note that the isomorphism
of that proposition preserves the  gradings. So $S(V)\#\Gamma_n$ admits a universal graded deformation 
$\Halg_{univ}$ over the algebra $S(\param_{univ})$, where $\param_{univ}$  is the dual of the degree $-2$ 
component in $\HH^2(S(V)\#\Gamma)$. Here we view $\param_{univ}$ as the space concentrated in degree $2$. 
The universality is understood as follows: for any other
space $\param'$ concentrated in degree $2$ and any other graded flat deformation $\Halg'$ of $S(V)\#\Gamma$
over $S(\param')$ there is a unique linear map $\nu':\param_{univ}\rightarrow \param'$ and a unique isomorphism
$S(\param')\otimes_{S(\param_{univ})}\Halg_{univ}\xrightarrow{\sim} \Halg'$ of graded $S(\param')$-algebras
that is the identity modulo $\param'$, where the homomorphism
$S(\param_{univ})\rightarrow S(\param')$ is given by $\nu'$.

If $\Gamma_n\neq \mathfrak{S}_n$, then  the module $V$ is symplectically irreducible over $\Gamma_n$.
It follows that $\param_{univ}\cong \param_{\Halg}$. So $\Halg$ is a universal graded deformation of
$S(V)\#\Gamma$. We will assume that $\Gamma_1\neq \{1\}$ (this case will be considered in Subsection \ref{SS_symp_red}).
Also we will assume that $n>1$. The case $n=1$ is similar to (and simpler than) that one.

Here is a remark that will be of importance in the sequel. We can define $\Halg$ over a suitable
algebraic extension $R$ of $\Z$, let $\Halg(R)$ denote the corresponding $R$-algebra.
Enlarging $R$ if necessary, we can assume that the results on the
Hochschild cohomology quoted above hold for $\Halg(R)$ and so also for $\Halg(\Fi)$ for an arbitrary
$R$-algebra $\Fi$. In particular, $\Halg(\Fi)$ is again a universal graded deformation of $S_\Fi(V_\Fi)\#\Gamma_n$.

\subsection{Deformation of Procesi bundles and map $\nu_{\Pro}$}\label{SS_def_Pro}
Set $X:=X^\theta$.
Let us produce a $\K^\times$-equivariant deformation $\Dcal$ of $X$ obtained by quantum Hamiltonian reduction.

Consider the completed  homogenized
Weyl algebra $\W^{\wedge_h}_h(V)=T(V)[[h]]/ (u\otimes v-v\otimes u-h\omega(u,v))$. We have the quantum comoment map
$\Phi:\g\rightarrow \W^{\wedge_h}_h(V)$ induced by the natural inclusion of $\mathfrak{sp}(V)$ into the second graded component
of $\W_h(V)$. We can sheafify $\W^{\wedge_h}_h(V)$ to $V$  and get the $\K^\times$-equivariant sheaf $\W_{h,V}$. We then restrict
$\W_{h,V}/\W_{h,V}\Phi([\g,\g])$  sheaf-theoretically to $\mu^{-1}(\z^*)^{ss}$, and consider the sheaf $\pi_{G*}(\W_{h,V}/\W_{h,V}\Phi([\g,\g]))^G$ on $\mu^{-1}(\z^*)^{ss}\quo G$. Here $\pi_G$ is the quotient morphism.
Set $\param_{red}:=\z\oplus \K h$, where $\z:=\g/[\g,\g]= \K^{Q_0}$. The previous sheaf is that of $S(\param_{red})$-algebras.
Then we restrict this sheaf to $X^\theta$ and complete the restriction with respect to $\param_{red}$-adic
topology. The resulting sheaf is what we denote by $\Dcal$.

The sheaf $\Dcal$ is a sheaf of $\K[[\param_{red}^*]]$-algebras, flat over $\K[[\param_{red}^*]]$, complete and separated in the $\param_{red}$-adic topology and deforming $\Str_{X}$ in the sense that $\Dcal/\param_{red}\Dcal\cong \Str_{X}$.
The deformation $\Dcal/h\Dcal$ is commutative and is a Poisson sheaf of algebras deforming the Poisson sheaf
$\Str_X$. Here we consider the Poisson bracket on $\Dcal/h\Dcal$  induced by the bracket $\frac{1}{h}[\cdot,\cdot]$ on $\Dcal$.
We write $\Drm$ for the subalgebra in $H^0(X,\mathcal{D})$  of $\K^\times$-finite elements (=sums of $\K^\times$-semiinvariants).
The algebra $\Drm$ can be obtained via Hamiltonian reduction on the level of algebras:
$\Drm=[W_h(V)/W_h(V)\Phi([\g,\g])]^G$, see, for example, \cite[Lemma 4.2.4, Section 4.2]{quant}.

Now take a normalized Procesi bundle $\Pro$ on $X$. Thanks to (P2), the bundle $\Pro$ extends to a unique
$\K^\times$-equivariant sheaf $\tilde{\Pro}_h$ of right $\Dcal$-modules and the algebra
$\End_{\Dcal^{opp}}(\tilde{\Pro}_h)$ is a complete $\K^{\times}$-equivariant deformation of $\End_{\Str_X}(\Pro)$
over $\K[[\param_{red}^*]]$. Let $H_{\Pro}$ be the subalgebra of $\K^\times$-finite
elements of $\End_{\Dcal^{opp}}(\tilde{\Pro}_h)$. Then $H_{\Pro}$ is a graded deformation of $\End_{\Str_X}(\Pro)$ over $S(\param_{red})$.
It follows that there is a unique linear map $\nu_{\Pro}:\param_{univ}\rightarrow \param_{red}$ such that $H_{\Pro}=S(\param_{red})\otimes_{S(\param_{univ})}\Halg$.

We want to get some restrictions on the map $\nu_{\Pro}$. For this, let $e$ be the trivial idempotent in
$\K\Gamma_n$. The algebra $e H_{\Pro}e$ is naturally identified with $\Drm$. Indeed, recall that $e\Pro=\Str_X$.
Both $\Dcal$ and $e\tilde{\Pro}_h$ are deformations of $\Str_X$. Thanks to the cohomology vanishing for $\Str_X$, these deformations
coincide. On the other hand, $e \End_{\Dcal^{opp}}(\tilde{\Pro}_h) e=\End_{\Dcal^{opp}}(e\tilde{\Pro}_h)=H^0(X, \Dcal)$
and so $e H_{\Pro}e=\Drm$. So we get $\Drm=S(\param_{red})\otimes_{S(\param_{univ})}e\Halg e$. The group $W\times \Z/2\Z$
acts on $\param_{red}$ preserving $h$ and acting on $\z$ as in \cite[6.4]{quant}. The following lemma is slightly rephrased \cite[Proposition 6.4.5]{quant}.

\begin{Lem}\label{Lem:param_map}
Let $\nu: \param_{univ}\rightarrow \param_{red}$ be a linear map such that $\Drm\cong S(\param_{univ})\otimes_{S(\param_{red})}eHe$
(a $S(\param_{univ})$-linear isomorphism of graded algebras that is the identity modulo $\param_{univ}$). Then $\nu$
is an isomorphism. Moreover, any two different $\nu$ are obtained from one another by multiplying from the left by an element of
$W\times \Z/2\Z$.
\end{Lem}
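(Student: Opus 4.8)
The plan is to recognize both families as graded quantizations of the coordinate ring $\K[X_0]=S(V)^{\Gamma_n}=e(S(V)\#\Gamma_n)e$ of the symplectic singularity $X_0$, and to classify such quantizations by their periods. First I would record why at least one $\nu$ exists: since $\Drm$ is a graded flat $S(\param_{red})$-deformation of $S(V)^{\Gamma_n}$, and $eHe$ is the spherical part of the universal graded deformation $\Halg$, the universality property of the previous subsection, applied to $\Drm$, produces the classifying map $\nu\colon\param_{univ}\to\param_{red}$. A dimension count then equalizes the two parameter spaces: $\param_{univ}=\param_{\Halg}$ has dimension $r+2$ (the coordinate $h$ together with $k=c_0$ and $c_1,\dots,c_r$), while $\param_{red}=\z\oplus\K h$ with $\z=\K^{Q_0}$ also has dimension $|Q_0|+1=r+2$, since the affine Dynkin diagram of $\Gamma_1$ has $r+1$ vertices. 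Hence to prove $\nu$ is an isomorphism it suffices to prove it is surjective.

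For surjectivity I would pass to the classical limit by setting $h=0$. Then $\Drm/(h)$ is the coordinate ring of the graded Poisson deformation $\tilde{X}_0\to\z$ of $X_0$ coming from Hamiltonian reduction, while $(eHe)/(h)$ is the classical spherical family over $\param_{univ}/\K h$. The conceptual input is Namikawa's theory: a conical symplectic singularity such as $X_0$ carries a versal graded Poisson deformation whose base may be taken to be $\h^*:=H^2(X^\theta,\K)$, and any versal such family is a pullback along a map to $\h^*$ that is determined up to the relevant finite symmetry group of periods. The key point is that both the quiver-variety family and the spherical family are versal, i.e.\ their Kodaira--Spencer maps are isomorphisms: on the reduction side this is built into the construction, as the $\z$-directions are exactly the moment-map shifts and these move the Poisson structure nondegenerately; on the SRA side it follows from the universality of $\Halg$. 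Versality forces each classifying map to $\h^*$ to be a linear isomorphism, and since $h$ is preserved on both sides, composing them shows that $\nu$ itself is an isomorphism.

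Finally, the ambiguity is governed by the symmetry group of periods, which I would identify with the group $W\times\Z/2\Z$ already acting on $\param_{red}$: the finite Weyl group $W$ of the Dynkin diagram of $\Gamma_1$ acts through the finite part of the root system, while the extra $\Z/2\Z$ is the involution fixing the finite roots and sending $\delta\mapsto-\delta$, corresponding on the level of algebras to the symplectic duality $\omega\mapsto-\omega$. Two admissible classifying maps therefore differ precisely by left multiplication by an element of $W\times\Z/2\Z$ on the target $\param_{red}$, which is the second assertion; this recovers \cite[Proposition 6.4.5]{quant}.

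The hardest step is the versality claim together with the precise identification of the period symmetry group as exactly $W\times\Z/2\Z$. One must verify that the quiver-variety family genuinely deforms $X_0$ in \emph{all} $\z$-directions rather than in a proper subspace, so that the classifying map is surjective and not merely well defined, and that no symmetry beyond $W\times\Z/2\Z$ preserves the family. Both rely on the detailed geometry of the affine Dynkin quiver and the codimension-two leaf structure of $X_0$, and this is the technical core carried out in \cite{quant}.
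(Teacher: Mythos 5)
The paper does not actually prove this lemma: it is quoted verbatim (``slightly rephrased'') from \cite[Proposition 6.4.5]{quant}, and your deformation-theoretic strategy is close in spirit to how that reference proceeds. But judged as a proof, your write-up has genuine gaps, and the most serious one is that the ``moreover'' clause is circular. You derive it from the assertion that ``the ambiguity is governed by the symmetry group of periods, which I would identify with $W\times\Z/2\Z$.'' Computing that group --- showing that the group of graded automorphisms of the universal family inducing the identity on the central fiber is exactly $W\times\Z/2\Z$, e.g.\ in Namikawa's picture that the codimension-two leaf with transverse slice $\K^2/\Gamma_1$ contributes the full finite Weyl group $W$ with no folding by monodromy of the leaf, that the transposition leaf contributes the $\Z/2\Z$, and that nothing larger occurs --- is precisely the content of the cited proposition; your one-line identification of the $\Z/2\Z$ with $\delta\mapsto-\delta$ (``symplectic duality'') is an assertion, not an argument. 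The same applies to surjectivity: that the Kodaira--Spencer map of the Hamiltonian-reduction family is an isomorphism in \emph{all} $\z$-directions is a theorem (Ginzburg--Kaledin/Namikawa/\cite{quant}), not something ``built into the construction''; you do flag this honestly, but together with the symmetry-group computation it is essentially the entire statement of the lemma, so what remains of your argument is the (correct) dimension count $\dim\param_{univ}=r+2=|Q_0|+1=\dim\param_{red}$ and the existence of $\nu$ from universality.

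There is also a quantum-versus-classical gap. Your actual argument runs at $h=0$, which can only determine $\nu$ modulo the line $\K h$: a priori $\nu(c_i)$ may contain components along $h$, and even $\nu(h)=h$ needs an argument (one can get it by matching the commutativity loci: the brackets on $e\Halg e$ and on $\Drm$ are divisible by $h$, so the hyperplane $h=0$ is intrinsic to both families, and the identity-modulo-parameters normalization then pins down the scalar). Classical Poisson versality therefore yields the conclusion only for the induced map $\param_{univ}/\K h\rightarrow \z$; to promote it to the graded quantum families one needs a classification of graded quantizations compatible with taking classical limits --- e.g.\ the canonical quantizations/period map of \cite{BK1}, which is how such statements are controlled in \cite{quant} and later in this paper. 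Your opening sentence gestures at classifying ``by periods,'' but the sketch never uses a quantum period map, only the $h=0$ fiber. This is not negligible bookkeeping: the $h$-components of the parameter maps genuinely matter here (the paper itself notes that certain identities among the $\nu$'s hold only ``on the affine hyperplane given by $h=1$''), so without the quantum step the uniqueness claim up to $W\times\Z/2\Z$ on all of $\param_{red}=\z\oplus\K h$ does not follow from the classical one.
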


One possible map $\nu$ can be described as follows. Let $\epsilon_i$ be the element in $\z$
corresponding to the vertex $i$ in the quiver. Then we can view $\epsilon_i,i=0,\ldots,r,$ and $h$
as a basis in $\param_{red}$. Also we have the basis $c_1,\ldots,c_r, k, h$  in $\param_{\Halg}=\param_{univ}$.
Form the element ${\bf c}\in \param_{univ}\otimes \K\Gamma_1$ by ${\bf c}=h+\sum_{i=1}^r c_i\sum_{\gamma\in S^0_i}\gamma$.
If $N_i$ is the representation of $\Gamma_1$ corresponding to $i$, then we can consider the element
$\operatorname{tr}_{N_i}{\bf c}\in \param_{univ}$.
Then for $\nu$ we can take the inverse of the following map $\param_{red}\xrightarrow{\sim}\param_{univ}$
\begin{equation}\label{eq:map} h\mapsto h, \epsilon_i\mapsto \operatorname{tr}_{N_i}{\bf c}, i\neq 0, \epsilon_0\mapsto \operatorname{tr}_{N_0}{\bf c}-(k+h)/2.
\end{equation}
\subsection{$\Pro$ is uniquely recovered from $\nu_\Pro$}
\begin{Prop}\label{Prop:Pro_coinc}
Let $\Pro^1,\Pro^2$ be normalized Procesi bundles on $X$ with $\nu_{\Pro^1}=\nu_{\Pro^2}$. Then $\Pro^1$
and $\Pro^2$ are equivalent.
\end{Prop}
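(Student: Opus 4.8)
The plan is to leverage the deformation machinery set up in the previous subsection, together with the codimension-two trick advertised in the introduction. Given two normalized Procesi bundles $\Pro^1,\Pro^2$ with $\nu_{\Pro^1}=\nu_{\Pro^2}=:\nu$, I first pass to the quantum deformations $\tilde{\Pro}^1_h,\tilde{\Pro}^2_h$ over $\Dcal$. Since $H_{\Pro^i}=S(\param_{red})\otimes_{S(\param_{univ})}\Halg$ with the \emph{same} map $\nu$, the two deformed endomorphism algebras $H_{\Pro^1}$ and $H_{\Pro^2}$ are isomorphic as graded $S(\param_{red})$-algebras. The idea is that this isomorphism of the global sections of the deformed endomorphism sheaves should force an isomorphism of the deformed bundles themselves, and then specializing $\param_{red}\to 0$ should give the desired equivalence $\Pro^1\cong\Pro^2$ over $X$.

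The key steps, in order, are as follows. First I would fix an isomorphism $\phi\colon H_{\Pro^1}\xrightarrow{\sim}H_{\Pro^2}$ of graded $S(\param_{red})$-algebras compatible with the common identification with $S(\param_{red})\otimes_{S(\param_{univ})}\Halg$; in particular $\phi$ carries the idempotent $e$ to $e$ (both sides identify $eH_{\Pro^i}e$ with $\Drm$ canonically, so after adjusting by an inner automorphism I may assume $\phi(e)=e$). Second, I interpret $\tilde{\Pro}^i_h$ as $\End_{\Dcal^{opp}}$-$H_{\Pro^i}$-bimodules: each $\tilde{\Pro}^i_h$ is recovered from $H_{\Pro^i}$ as a projective module, namely as $e H_{\Pro^i}$ or more precisely via the Morita context relating $\Dcal=eH_{\Pro^i}e$ with $H_{\Pro^i}$. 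Concretely, the functor $M\mapsto e M$ from finitely generated projective $H_{\Pro^i}$-modules to $\Dcal$-modules sends the rank-one free module to $e\tilde{\Pro}^i_h$, so $\tilde{\Pro}^i_h$ is reconstructed from the abstract data $(H_{\Pro^i},e)$. Third, I use $\phi$ to transport $\tilde{\Pro}^1_h$ to $\tilde{\Pro}^2_h$ as sheaves of right $\Dcal$-modules, obtaining a $\K^\times$-equivariant isomorphism $\tilde{\Pro}^1_h\cong\tilde{\Pro}^2_h$. Finally I reduce modulo $\param_{red}$: since $\tilde{\Pro}^i_h/\param_{red}\tilde{\Pro}^i_h=\Pro^i$, this yields a $\K^\times$-equivariant isomorphism $\Pro^1\xrightarrow{\sim}\Pro^2$, which by the result of the first subsection is automatically an equivalence of normalized Procesi bundles.

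I expect the main obstacle to be the reconstruction step: going from an abstract graded algebra isomorphism $\phi\colon H_{\Pro^1}\cong H_{\Pro^2}$ to a genuine isomorphism of the \emph{sheaves} $\tilde{\Pro}^1_h\cong\tilde{\Pro}^2_h$ of $\Dcal$-modules. The subtlety is that $H_{\Pro^i}$ is only the algebra of $\K^\times$-finite global endomorphisms, so I must argue that the sheaf $\tilde{\Pro}^i_h$ of right $\Dcal$-modules is determined, $\K^\times$-equivariantly, by these global data. This is where the codimension-two property from the introduction enters: the sheaf $e\tilde{\Pro}^i_h=\Dcal$ agrees with the structure sheaf, and because $\Pro^i$ restricted to the regular locus is $\eta_*\Str_{V^{reg}}$ while the complement has codimension at least two after deformation, global sections determine the sheaf. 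So I would phrase the reconstruction as: $\tilde{\Pro}^i_h$ equals the sheaf-theoretic image of the $\Dcal$-module generated by $H^0(X,\tilde{\Pro}^i_h)$, together with a normal-extension argument across the codimension-two bad locus. Making this precise — in particular checking that $\phi$ can be chosen $\K^\times$-equivariantly and that the transported module structure extends uniquely over the singular locus — is the technical heart of the proof.
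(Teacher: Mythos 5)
Your high-level plan --- same $\nu$ forces the same deformed global sections, and a codimension-two argument upgrades this to an isomorphism of bundles --- is indeed the mechanism of the paper's proof, but your implementation of the reconstruction step, which you correctly identify as the heart of the matter, has a genuine gap. You try to transport the module structure over the \emph{full} formal base $\K[[\param_{red}^*]]$ and then reduce modulo $\param_{red}$. For that you need the sheaf $\tilde{\Pro}^i_h$ of right $\Dcal$-modules to be determined, via Morita theory, by the pair $(H_{\Pro^i},e)$, i.e.\ by its $\K^\times$-finite global sections. This is exactly what fails: the formal base is local at the special point, all normalized Procesi bundles have the same global sections $\Gamma(\Pro)=\K[V]$ there, and on $X$ itself the bad locus $X\setminus X^{reg}$ has codimension $1$ --- which, as the paper emphasizes in the introduction, is precisely why nonequivalent Procesi bundles exist at all. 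Your fallback (``the sheaf-theoretic image of the $\Dcal$-module generated by $H^0$, plus a normal-extension argument across the codimension-two bad locus'') has no setting in which to run: $\tilde{\Pro}^i_h$ is a sheaf on $X$ over a formal base, not a sheaf on an honest scheme in which the bad locus has codimension $2$, and codimension-two extension is a statement about locally free (or reflexive) sheaves on normal varieties. (A smaller slip: $e\tilde{\Pro}^i_h\cong\Dcal$, while the relevant global-sections module is $\Halg e$ as a right $e\Halg e$-module; your functor $M\mapsto eM$ is pointed in the wrong direction.)

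The paper closes this gap by \emph{specializing} instead of working over the whole formal base: pick a generic $\alpha\in\z^*$, set $h=0$, and restrict parameters to the line $\K\alpha$; $\K^\times$-finiteness algebraizes everything, producing an honest one-parameter commutative deformation $\tilde{X}$ over $\K$ with bundles $\Pro^i_\alpha$, and $\Gamma(\Pro^1_\alpha)\cong\Gamma(\Pro^2_\alpha)$ $\K^\times$-equivariantly because both are the same specialization of $\Halg e$ --- this is the only place $\nu_{\Pro^1}=\nu_{\Pro^2}$ is used. Then the geometry you gestured at becomes available: over $z\neq 0$ the fibers of $\tilde{X}\to\K$ are affine (genericity of $\alpha$), so $\tilde{\pi}:\tilde{X}\to\tilde{X}_0$ is an isomorphism there, while over $z=0$ it is $\pi$, an isomorphism over $X_0^{reg}$; hence $\tilde{\pi}$ is an isomorphism off $X\setminus X^{reg}$, of codimension $2$ in $\tilde{X}$. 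The module isomorphism therefore gives a bundle isomorphism off a codimension-two subset, which extends to all of $\tilde{X}$ since both sheaves are locally free, and restriction to the zero fiber gives $\Pro^1\cong\Pro^2$. Your final step is fine: by construction the isomorphism induces the identity on $\K[V]\#\Gamma_n$, and in any case Lemma \ref{Lem:wreath_aut} makes any $\K^\times$-equivariant isomorphism of normalized Procesi bundles an equivalence. So the beginning and end of your argument survive, but the middle must be replaced by this specialization to a generic commutative one-parameter deformation.
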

\begin{proof}
Pick a generic element $\alpha\in \z^*$ and consider the corresponding one-parametric deformation
$\tilde{X}$ over $\K$. Let $\Pro^1_\alpha,\Pro_\alpha^2$ be the corresponding deformations of $\Pro^1,\Pro^2$
on $\tilde{X}$. We claim that $\Gamma(\Pro^1_\alpha)\cong \Gamma(\Pro^2_\alpha)$
(a $\K^\times$-equivariant isomorphism of $\K[\tilde{X}]$-modules). Indeed, we have $\Gamma(\tilde{\Pro}^i_h)_{fin}=\Halg e$
(a $\K^\times$-equivariant isomorphism of right $e\Halg e$-modules) and $\Gamma(\Pro^i_\alpha)$ is a specialization of
$\Gamma(\tilde{\Pro}^i_h)$ corresponding to the projection $\K[\z^*][h]\twoheadrightarrow \K[\K\alpha]$.
The specializations are the same for both $i=1,2$ exactly because $\nu_{\Pro^1}=\nu_{\Pro^2}$.
Consider the natural morphism $\tilde{\pi}:\tilde{X}\rightarrow \tilde{X}_0$, where $\tilde{X}_0$ is the spectrum
of $\K[\tilde{X}]$. Over the nonzero points of $\K$, the fibers of $\tilde{X}\rightarrow \K$ are affine, because $\alpha$ is generic.
It follows that $\tilde{\pi}$ is an isomorphism over $\K\setminus \{0\}$.  Over the zero point, $\tilde{\pi}$ is $\pi$ that is an isomorphism
generically. So $\tilde{\pi}$ is an isomorphism on the complement of $X\setminus X^{reg}$, a closed subvariety of
codimension $2$ in $\tilde{X}$. So $\tilde{\Pro}_1,\tilde{\Pro}_2$ are $\K^\times$-equivariantly isomorphic
bundles on the complement of $X\setminus X^{reg}$. It follows that $\tilde{\Pro}_1\cong\tilde{\Pro}_2$
and hence $\Pro_1\cong \Pro_2$. By the construction, this isomorphism induces the identity automorphism
of $\K[\K^{2n}]\#\Gamma_n$.
\end{proof}

For the future use, we remark that the argument makes sense over an algebra $\Fi$ over a suitable algebraic
extension $R$ of $\mathbb{Z}$.

\subsection{Symplectically reducible case}\label{SS_symp_red}
Now let us consider the group $\Gamma\subset \operatorname{Sp}(V)$ that is a product of several groups
of the form $\Gamma^i_{n_i}$, where $\Gamma^i_1\subset \SL_2(\K)$.
One way to get such a group is to take the stabilizer of some $v\in V$ inside
$\Gamma_n$. We can decompose $\Gamma$ as $\Gamma=\prod_{i=1}^j \Gamma^i$ and $V$ as $V=\bigoplus_{i=1}^j V^i\oplus V^{\Gamma}$,
where $V^i$ is an irreducible $\Gamma^i$-module.
Then, if we have a $\K^\times$-equivariant symplectic resolution $X\twoheadrightarrow V/\Gamma$ (for example,
the product of resolutions of individual quotients $V^i/\Gamma^i$), we can introduce the notion of
a Procesi bundle exactly as before. We claim, for the product resolution $X$ (where the factors are obtained
by Hamiltonian reduction), that modulo
Theorem \ref{Thm:main1}, any Procesi bundle on $X$ is obtained as a product of Procesi bundles on the factors
(clearly any Procesi bundle on $V^{\Gamma}$ viewed as a resolution of itself is trivial).

First of all, let us remark that we still have a universal graded deformation $\Halg_{univ}$ of $S(V)\#\Gamma$. It is obtained
as the product of the SRA's $\Halg(V^i,\Gamma^i)$ (with parameter spaces $\param_{\Halg}^i$)
and the universal Weyl algebra (over $\bigwedge^2 V^\Gamma$) of the space $V^{\Gamma}$ (and so the space
of parameters $\param_{univ}$ for the universal deformation is $\bigoplus_{i=1}^j \param^i_{\Halg}\oplus \bigwedge^2 V^{\Gamma}$).
We still can consider the symplectic reflection algebra $\Halg$ for $(V,\Gamma)$ that is a specialization
of the universal deformation. It can be characterized as a unique specialization with the following property:
any one-parametric deformation $\Halg'$ of $S(V)\#\Gamma$ such that the Poisson bracket on $S(V)^{\Gamma}$
induced by $e\Halg' e$ is proportional to the standard bracket factors through $\Halg$.

Also we can consider the deformations $\mathcal{D}$ of $X$ and $\Drm$ of $V/\Gamma$ obtained by Hamiltonian
reduction. The deformation $\Drm$ still has the compatibility with the Poisson bracket mentioned in
the previous paragraph. It follows that the deformation $\Halg_{\Pro}$ factors through $\Halg$.
Lemma \ref{Lem:param_map} generalizes to this situation in a straightforward way, the corresponding
automorphism group is the product of those for individual factors. From here we deduce that the map
$\nu_{\Pro}$ is basically the product of the maps $\nu_{\Pro^i}$ (more precisely, this is true as stated
on the affine hyperplane given by $h=1$). Proposition \ref{Prop:Pro_coinc} generalizes verbatim together
with its proof. The claim in the beginning of the subsection follows.

\section{Construction of Procesi bundles}\label{S_Pro_constr}
The main goal of this section is to produce $2|W|$ non-isomorphic Procesi bundles on $X=X^\theta$ following Bezrukavnikov and Kaledin.
This will complete the proof of part (1) of the main theorem.

\subsection{From Azumaya algebras to Procesi bundles}\label{SS_Az_to_Pro}
In this subsection we recall a construction of a Procesi bundle on $X$ given in \cite{BK}.
The base field in their construction is an algebraically closed field $\Fi$ of characteristic
$p\gg 0$. But all objects we will consider are actually defined over a finite subfield
of $\Fi$. We now set $V=\Fi^{2n}$ and let $X$ be a resolution of $V/\Gamma_n$ obtained by
Hamiltonian reduction. As in \cite{BK}, the superscript ``$(1)$'' means the Frobenius twist.

An input for the construction is a microlocal quantization $\mathrm{O}$ of $X$ that has the following properties:
\begin{enumerate}
\item The center of $\mathrm{O}$ is $\operatorname{Fr}^*\mathcal{O}_{X^{(1)}}$, where $\operatorname{Fr}:X\rightarrow X^{(1)}$ is a Frobenius morphism.
\item We have a $S(V^{(1)})^{\Gamma_n}$-algebra isomorphism $\Gamma(\mathrm{O})\cong \Aa^{\Gamma_n}$, where
$\Aa$ is the Weyl algebra of $V$.
\end{enumerate}

Let us describe how to recover a Procesi bundle $\Pro$ (on $X^{(1)}$; this variety is equivariantly isomorphic to $X$)
from $\mathrm{O}$. The push-forward $\operatorname{Fr}_*\mathrm{O}$ is an Azumaya algebra over $X^{(1)}$ (below we abuse the notation
and write $\mathrm{O}$ instead of $\operatorname{Fr}_*\mathrm{O}$  meaning an Azumaya algebra).
Let $\hat{X}^{(1)}$ denote the formal neighborhood of the preimage of $0$ (under $\pi$) in $X^{(1)}$.
Then it follows from \cite[6.2]{BK} that $\mathrm{O}|_{\hat{X}^{(1)}}$ splits. Fix a splitting bundle $\mathcal{S}$
(recall that it is defined up to a twist with a line bundle).
Let us produce a bundle $\hat{\Pro}_{\mathcal{S}}$ on $\hat{X}^{(1)}$. Namely, we have an equivalence
$\operatorname{Coh}(\hat{X}^{(1)})\xrightarrow{\sim} \operatorname{Coh}(\hat{X}^{(1)},\mathrm{O})$
given by $\mathcal{S}\otimes\bullet$. Then the derived global sections functor $R\Gamma$ gives
an equivalence $D^b(\operatorname{Coh}(\hat{X}^{(1)},\mathrm{O}))\xrightarrow{\sim} D^b(\hat{\W}^{\Gamma_n}\operatorname{-mod})$,
where  $\hat{\W}$ stands for the  completion of the Azumaya algebra $\W$ with respect to $0\in V^{(1)}$.
Next, we have a Morita equivalence between $\hat{\W}^{\Gamma_n}$ and $\hat{\W}\#\Gamma_n$:
$\hat{\W}^{\Gamma_n}\operatorname{-mod}\xrightarrow{\sim}\hat{\W}\#\Gamma_n\operatorname{-mod}$.
The Azumaya algebra $\hat{\W}$ on $V^{(1)}$ admits a $\Gamma$-equivariant splitting bundle, say $\mathcal{Q}$,
this is shown using an argument of the proof of \cite[Theorem 6.7]{BK}. We fix this splitting once and for all.
It gives rise to an equivalence $\hat{\W}\#\Gamma_n\operatorname{-mod}\xrightarrow{\sim}
\operatorname{Coh}^{\Gamma_n}(\hat{V}^{(1)})$, where $\hat{V}^{(1)}$ denotes the formal
neighborhood of $0$ in $V^{(1)}$. Let $\iota_{\mathcal{S}}:D^b(\operatorname{Coh}(X^{(1)}))\xrightarrow{\sim} D^b(\operatorname{Coh}^{\Gamma_n}(\hat{V}^{(1)}))$ be the resulting equivalence. Then we set $\hat{\Pro}_{\mathcal{S}}:=\iota_{\mathcal{S}}^{-1}(\mathcal{O}_{\hat{V}^{(1)}}\#\Gamma_n)$. This is a
vector bundle on $\hat{X}^{(1)}$, this is proved completely analogously to a similar argument
in the proof of \cite[Theorem 6.7]{BK}. We remark that, by the construction, $\hat{\Pro}_{\mathcal{S}}$
has vanishing higher Ext's and comes equipped with an isomorphism
$\operatorname{End}(\hat{\Pro}_S)\xrightarrow{\sim}\Fi[[V^{(1)}]]\#\Gamma_n$. Therefore we can normalize
$\mathcal{S}$ and  $\hat{\Pro}_{\mathcal{S}}$ by requiring that $\hat{\Pro}_{\mathcal{S}}^{\Gamma_n}$
is the structure sheaf. The corresponding bundle will be denoted by $\hat{\Pro}_{\mathrm{O}}$.

Let us show how to recover an $\Fi^\times$-equivariant structure on $\hat{\Pro}_{\mathrm{O}}$.
For an irreducible $\Gamma_n$-module $N$,  let $\hat{\Pro}_{\mathrm{O}}^N:=\Hom_\Gamma(N, \hat{\Pro}_{\mathrm{O}})$
so that $\hat{\Pro}_{\mathrm{O}}=\bigoplus_N \hat{\Pro}_{\mathrm{O}}^N\otimes N$.
The bundles $\hat{\Pro}_{\mathrm{O}}^N$ are indecomposable.
To see  this one can argue as follows. The algebra
$\operatorname{End}(\hat{\Pro}_{\mathrm{O}}^N)$ coincides with $e_N \operatorname{End}(\hat{\Pro}_{\mathrm{O}})e_N=
e_N(\Fi[[V^{(1)}]]\#\Gamma_n)e_N$, where $e_N\in \Fi\Gamma$ is an indecomposable idempotent corresponding to $N$.
The algebra $e_N(\Fi[[V^{(1)}]]\#\Gamma_n)e_N$ is complete in the $V^{(1)}$-adic topology and the quotient modulo
$V^{(1)}$ is $\Fi$. So if $e',e''\in e_N(\Fi[[V^{(1)}]]\#\Gamma_n)e_N$ are two commuting idempotents with $e'e''=0$,
then one of $e',e''$ is zero. This shows that $\hat{\Pro}_{\mathrm{O}}^N$ is indecomposable.

So the bundles $\hat{\Pro}_{\mathrm{O}}^N$ are indecomposable and have trivial higher Ext's.
So, according to \cite{Vologodsky}, each admits an equivariant structure, unique up to a twist with a character of $\Fi^\times$.

\begin{Lem}
 After suitable twists, there is an $\Fi^\times$-equivariant $\Fi[[V^{(1)}]]^{\Gamma_n}$-equivariant isomorphism $\End(\hat{\Pro}_{\mathrm{O}})\xrightarrow{\sim} \Fi[[V^{(1)}]]\#\Gamma_n$.
\end{Lem}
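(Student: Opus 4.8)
The plan is to exploit the two independent sources of $\Fi^\times$-equivariant structure at our disposal and match them using the uniqueness statement of \cite{Vologodsky}. By the construction of this subsection we already have an isomorphism $\phi\colon \End(\hat{\Pro}_{\mathrm{O}})\xrightarrow{\sim}\Fi[[V^{(1)}]]\#\Gamma_n$ of $\Fi[[V^{(1)}]]^{\Gamma_n}$-algebras, produced through $\iota_{\mathcal S}$, which intertwines the two fibrewise $\Gamma_n$-actions but is a priori not equivariant for $\Fi^\times$. The target carries its standard grading, with $V^{(1)*}$ in positive degree and $\Gamma_n$ in degree $0$; equivalently it carries an $\Fi^\times$-action by algebra automorphisms covering the contracting action on $\hat X_0^{(1)}=\Spf \Fi[[V^{(1)}]]^{\Gamma_n}$ and commuting with $\Gamma_n$. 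First I would transport this action through $\phi$ to obtain an $\Fi^\times$-action on the sheaf of algebras $\mathcal{E}nd(\hat{\Pro}_{\mathrm{O}})$ by $\Str_{\hat X^{(1)}}$-algebra automorphisms, covering the contracting action on $\hat X^{(1)}$ and commuting with $\Gamma_n$.

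The heart of the argument is to lift this transported action to an honest $\Fi^\times$-equivariant structure on the bundle $\hat{\Pro}_{\mathrm{O}}$ itself. Here I would use that the indecomposable summands $\hat{\Pro}_{\mathrm{O}}^N$ have vanishing higher self-extensions, so that by \cite{Vologodsky} each admits an equivariant structure compatible with any prescribed action on $\End(\hat{\Pro}_{\mathrm{O}}^N)$, unique up to a twist by an $\Fi^\times$-character. Since the transported action commutes with $\Gamma_n$ it preserves the isotypic decomposition $\hat{\Pro}_{\mathrm{O}}=\bigoplus_N \hat{\Pro}_{\mathrm{O}}^N\otimes N$ and descends to an action on each $\End(\hat{\Pro}_{\mathrm{O}}^N)$; lifting these summand by summand and assembling produces a candidate equivariant structure $\mathbf B$ on $\hat{\Pro}_{\mathrm{O}}$ for which $\phi$ becomes degree preserving by construction. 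I expect the genuine obstacle to lie precisely in this assembling step, namely in checking that the summand-wise lifts can be chosen so that the gradings they induce on the off-diagonal pieces $\Hom(\hat{\Pro}_{\mathrm{O}}^N,\hat{\Pro}_{\mathrm{O}}^{N'})$ agree with those transported from $\Fi[[V^{(1)}]]\#\Gamma_n$.

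Concretely this off-diagonal comparison is governed by a single integer shift. Fixing once and for all the equivariant structures $\mathbf A$ of \cite{Vologodsky} on the $\hat{\Pro}_{\mathrm{O}}^N$ (normalised so that the summand for the trivial $N$ is $\Str_{\hat X^{(1)}}$), each $\Hom(\hat{\Pro}_{\mathrm{O}}^N,\hat{\Pro}_{\mathrm{O}}^{N'})$ becomes a graded invertible bimodule over the connected, positively graded algebras $\End(\hat{\Pro}_{\mathrm{O}}^{N'})$ and $\End(\hat{\Pro}_{\mathrm{O}}^{N})$, and such a bimodule is isomorphic to the block $e_{N'}\bigl(\Fi[[V^{(1)}]]\#\Gamma_n\bigr)e_{N}$ as a graded module up to a unique shift $s_{N,N'}\in\Z$. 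Multiplicativity of $\phi$ forces these shifts to be additive, $s_{N,N''}=s_{N,N'}+s_{N',N''}$, so with $m_N:=s_{N_0,N}$ for $N_0$ the trivial representation one has $s_{N,N'}=m_{N'}-m_N$. Twisting the structure on each $\hat{\Pro}_{\mathrm{O}}^N$ by the character $t\mapsto t^{m_N}$ shifts the grading on every $\Hom(\hat{\Pro}_{\mathrm{O}}^N,\hat{\Pro}_{\mathrm{O}}^{N'})$ by $m_{N'}-m_N$, cancelling $s_{N,N'}$; the resulting structure is exactly $\mathbf B$ and makes every block of $\phi$ degree preserving. Hence after these twists $\phi$ is simultaneously $\Fi^\times$-equivariant and $\Fi[[V^{(1)}]]^{\Gamma_n}$-linear, as required.
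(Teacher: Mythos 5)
Your argument hinges on the claim that the $\Fi^\times$-action transported through $\phi$ onto $\mathcal{E}nd(\hat{\Pro}_{\mathrm{O}})$ lifts to an equivariant structure $\mathbf{B}$ on the bundle itself, and you attribute to \cite{Vologodsky} that each $\hat{\Pro}_{\mathrm{O}}^N$ ``admits an equivariant structure compatible with any prescribed action on $\End(\hat{\Pro}_{\mathrm{O}}^N)$''. That is strictly stronger than what is available: the cited result gives existence of some equivariant structure and uniqueness up to a character twist; it does not say that an arbitrary $\Fi^\times$-action on the endomorphism algebra covering the dilations is induced by one. For fixed $t$, the transported automorphism only yields $\mathcal{E}nd(\hat{\Pro}_{\mathrm{O}}^N)\cong \mathcal{E}nd(t^*\hat{\Pro}_{\mathrm{O}}^N)$, hence $t^*\hat{\Pro}_{\mathrm{O}}^N\cong \hat{\Pro}_{\mathrm{O}}^N\otimes\mathcal{L}_t$ for some line bundle $\mathcal{L}_t$; to get $\mathbf{B}$ you must kill $\mathcal{L}_t$ (say, by a determinant argument together with torsion-freeness of $\Pic(\hat{X}^{(1)})$) and then split the resulting central extension of $\Fi^\times$ by $\Aut(\hat{\Pro}_{\mathrm{O}}^N)$. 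None of this appears in the proposal, and note that once $\mathbf{B}$ is granted your third paragraph becomes superfluous: $\phi$ is $\mathbf{B}$-equivariant by construction, and $\mathbf{B}$ differs from $\mathbf{A}$ by summandwise twists immediately by the uniqueness part of \cite{Vologodsky}. So the entire content of the lemma sits in the unproved lifting step.

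The shift bookkeeping cannot substitute for it. Twisting $\hat{\Pro}_{\mathrm{O}}^N$ by a character changes nothing on the diagonal blocks, since the induced action on $\End(\hat{\Pro}_{\mathrm{O}}^N)$ is twist-independent; so if $\phi$ fails to intertwine the $\mathbf{A}$-induced grading with the standard grading of $e_N(\Fi[[V^{(1)}]]\#\Gamma_n)e_N$, no choice of the $m_N$ repairs this, and your treatment of the off-diagonal pieces as ``graded invertible bimodules over the connected, positively graded algebras $\End(\hat{\Pro}_{\mathrm{O}}^N)$'' presupposes exactly this diagonal identification. Moreover, abstract graded isomorphisms of the blocks up to shifts, even additive ones, do not make the specific map $\phi$ degree preserving: blockwise isomorphisms chosen independently need not assemble into an algebra map, and controlling the discrepancy amounts to analyzing the cocycle $t\mapsto (\text{standard action of }t)\circ\phi\circ(\mathbf{A}\text{-action of }t)^{-1}\circ\phi^{-1}$ with values in $\Fi[[V^{(1)}]]^{\Gamma_n}$-algebra automorphisms, i.e.\ to an input like Lemma \ref{Lem:wreath_aut} (outer graded automorphisms are characters of $\Gamma_n$), which you never invoke. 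The paper sidesteps all of this by working at the level of modules over the invariants: it shows that each $\Fi[[V^{(1)}]]^N=\Hom_{\Gamma_n}(N,\Fi[[V^{(1)}]])$ is indecomposable with a unique $\Fi^\times$-equivariant structure up to twist (by restricting to $\hat{V}^{(1)reg}/\Gamma_n$, where the pullback becomes $\Str\otimes N$), deduces that after twists $\Gamma(\hat{\Pro}_{\mathrm{O}})\cong\Fi[[V^{(1)}]]$ as equivariant $\Fi[[V^{(1)}]]^{\Gamma_n}$-modules, and then obtains the equivariant algebra isomorphism from the Etingof--Ginzburg isomorphism $\Fi[[V^{(1)}]]\#\Gamma_n\xrightarrow{\sim}\End_{\Fi[[V^{(1)}]]^{\Gamma_n}}(\Fi[[V^{(1)}]])$ of \cite{EG}. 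If you want to salvage your route, you should prove the lifting claim along the lines indicated above; as written, the proof has a genuine gap.
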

\begin{proof}
We have some $\Fi[[V^{(1)}]]^{\Gamma_n}$-equivariant isomorphism by the construction of $\hat{\Pro}_{\mathrm{O}}$. First, we will show that
there are twists with the $\Fi[[V^{(1)}]]^{\Gamma_n}$-modules $\Fi[[V^{(1)}]],\Gamma(\Pro_{\mathrm{O}})$ being $\Fi^\times$-equivariantly isomorphic. For this it is enough to show that the $\Fi[[V^{(1)}]]^{\Gamma_n}$-modules $\Fi[[V^{(1)}]]^N:=\Hom_{\Gamma_n}(N, \Fi[[V^{(1)}]])$, where $N$ is an irreducible $\Gamma_n$-representation, are indecomposable and each  admits a unique $\Fi^\times$-equivariant structure up to a twist.

We start with some notation.  Let $V^{(1) reg}$ denote the open subvariety of $V^{(1)}$ consisting of points with free
$\Gamma_n$-orbits. Let $\hat{V}^{(1)}$ denote the formal neighborhood  of $0$ in $V^{(1)}$.
We set $\hat{V}^{(1)reg}:=\hat{V}^{(1)}\cap V^{(1)reg}$, we would like to emphasize that $\hat{V}^{(1)reg}$
is a $\Fi$-scheme, and not a formal scheme. The restriction $\Fi[[V^{(1)}]]^N|_{\hat{V}^{(1)reg}/\Gamma_n}$
is a vector bundle. Since $\Fi[[V^{(1)}]]^N$ is a torsion-free $\Fi[[V^{(1)}]]^{\Gamma_n}$-module, our claim in
the end of the previous paragraph will follow if we show that $\Fi[[V^{(1)}]]^N|_{\hat{V}^{(1)reg}/\Gamma_n}$
is indecomposable and admits a unique $\Fi^\times$-equivariant structure up to a twist. In its turn, this will
follow if we check that the pull-back $\mathcal{O}_{\hat{V}^{(1)reg}}\otimes_{\Fi[[V^{(1)}]]^{\Gamma_n}}\Fi[[V^{(1)}]]^N$
is indecomposable as an $\Fi[[V^{(1)}]]\#\Gamma_n$-module and admits a unique $\Fi^\times$-equivariant structure
up to a twist.

But $\mathcal{O}_{\hat{V}^{(1)reg}}\otimes_{\Fi[[V^{(1)}]]^{\Gamma_n}}\Fi[[V^{(1)}]]^N=\mathcal{O}_{\hat{V}^{(1)reg}}\otimes_{\Fi} N$.
The global sections of the latter bundle is $\Fi[[V^{(1)}]]\otimes_{\Fi}N$, an indecomposable $\Fi[[V^{(1)}]]\#\Gamma_n$-module
that clearly has a unique $\Fi^\times$-equivariant structure up to a twist. The analogous claims about  $\mathcal{O}_{\hat{V}^{(1)reg}}\otimes_{\Fi[[V^{(1)}]]^{\Gamma_n}}\Fi[[V^{(1)}]]^N$ and $\Fi[[V^{(1)}]]^N|_{\hat{V}^{(1)reg}/\Gamma_n}$
follow. Since the latter is the restriction of $\Fi[[V^{(1)}]]^N$ to an open subscheme, we deduce the analogous claims
for $\Fi[[V^{(1)}]]^N$. This implies, that, after a twist, the $\Fi^\times$-equivariant $\Fi[[V^{(1)}]]^{\Gamma_n}$-modules
$\Fi[[V^{(1)}]]$ and $\Gamma(\hat{\Pro}_{\mathrm{O}})$ become isomorphic.

Then, if we endow $\Fi[[V^{(1)}]]\#\Gamma_n$ with  an $\Fi^\times$-action coming from $\operatorname{End}(\mathcal{P}_{\mathrm{O}})$, a natural homomorphism $\Fi[[V^{(1)}]]\#\Gamma_n\rightarrow \operatorname{End}_{\Fi[[V^{(1)}]]^{\Gamma_n}}(\Fi[[V^{(1)}]])$ becomes an $\Fi^\times$-equivariant homomorphism of $\Fi[[V^{(1)}]]^{\Gamma_n}$-algebras. But, as in the proof of \cite[Theorem 1.5]{EG}, the homomorphism is an isomorphism. This completes the proof of the lemma.
\end{proof}

It follows that we can extend $\hat{\Pro}_{\mathrm{O}}$ to a bundle $\Pro_{\mathrm{O}}$ on $X^{(1)}$
and that $\Pro_{\mathrm{O}}$ is a Procesi bundle.

\begin{Rem}\label{Rem:equi}
The same argument shows that we do not need to specify the $\K^\times$-equivariant structure in the definition
of the Procesi bundle: the bundle will automatically come with such a structure (normalized by the condition
that $\Pro^{\Gamma_n}=\Str_X$ as equivariant bundles).
\end{Rem}

The picture above will be useful to check that different quantizations $\mathrm{O}$ give different
normalized Procesi bundles. The first claim in this direction is below.

\begin{Lem}\label{Lem:Azumaya_equal}
Let $\Az^1,\Az^2$ be two Azumaya algebras on $X^{(1)}$ as before. Suppose that $\Pro_{\Az^1}$
is equivalent to $\Pro_{\Az^2}$. Then $\hat{\Az}^1\cong \hat{\Az}^2$.
\end{Lem}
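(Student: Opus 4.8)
The plan is to reconstruct the completed Azumaya algebra $\hat\Az^i$ from the Procesi bundle $\hat\Pro_{\Az^i}$ together with the fixed global data used in its construction, and then to show that an equivalence of Procesi bundles forces the two reconstructions to agree up to a line-bundle twist of the splitting bundle, which does not change the endomorphism algebra. First I would pass to the formal neighborhood: an equivalence $\Pro_{\Az^1}\sim\Pro_{\Az^2}$ restricts to an $\Fi^\times$-equivariant isomorphism $\phi\colon\hat\Pro_{\Az^1}\xrightarrow{\sim}\hat\Pro_{\Az^2}$ over $\hat X^{(1)}$ whose induced automorphism of $\mathcal{E}nd\cong\Fi[[V^{(1)}]]\#\Gamma_n=:\mathcal{A}$ is inner, i.e. equal to $a_\chi$ for a character $\chi$ of $\Gamma_n$. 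Since a normalized Procesi bundle is a tilting object with $\mathcal{E}nd(\hat\Pro_{\Az^i})=\mathcal{A}$, it determines the derived equivalence $\iota_{\Az^i}=R\mathcal{H}om(\hat\Pro_{\Az^i},-)\colon D^b(\operatorname{Coh}\hat X^{(1)})\to D^b(\operatorname{Coh}^{\Gamma_n}\hat V^{(1)})$ sending $\hat\Pro_{\Az^i}\mapsto\Str_{\hat V^{(1)}}\#\Gamma_n$; this is exactly the equivalence $\iota_{\mathcal{S}^i}$ built above. The isomorphism $\phi$ then gives $\iota_{\Az^1}\cong\tau\circ\iota_{\Az^2}$, where $\tau$ is the autoequivalence of $\operatorname{Coh}^{\Gamma_n}\hat V^{(1)}$ induced by $a_\chi$.

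Next I would exploit the factorization $\iota_{\Az^i}=G^i\circ(\mathcal{S}^i\otimes-)$, where $\mathcal{S}^i\otimes-\colon\operatorname{Coh}\hat X^{(1)}\to\operatorname{Coh}(\hat X^{(1)},\Az^i)$ is the Morita equivalence attached to the splitting bundle and $G^i\colon\operatorname{Coh}(\hat X^{(1)},\Az^i)\to\operatorname{Coh}^{\Gamma_n}\hat V^{(1)}$ is the composite of $R\Gamma$ with the \emph{fixed} Morita equivalence $\hat\W^{\Gamma_n}\text{-mod}\cong\hat\W\#\Gamma_n\text{-mod}$ and the \emph{fixed} $\mathcal{Q}$-splitting. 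The crucial point is that $G^i$ sends the free module $\Az^i$ to one and the same object $\mathcal{R}\in\operatorname{Coh}^{\Gamma_n}\hat V^{(1)}$: indeed $R\Gamma(\Az^i)=\hat\W^{\Gamma_n}$ by property (2), and the remaining two functors building $G^i$ are independent of $i$, so $(G^i)^{-1}(\mathcal{R})=\Az^i$. I would then form the transport equivalence $\Theta:=(\mathcal{S}^1\otimes-)\circ(\mathcal{S}^2\otimes-)^{-1}\cong(G^1)^{-1}\circ\tau\circ G^2$, a functor $\operatorname{Coh}(\hat X^{(1)},\Az^2)\to\operatorname{Coh}(\hat X^{(1)},\Az^1)$, and evaluate it on the free module $\Az^2$. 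The left-hand description gives $\Theta(\Az^2)=\mathcal{S}^1\otimes_{\Str}(\mathcal{S}^2)^\vee$, while the right-hand one gives $(G^1)^{-1}(\tau\mathcal{R})$, which equals $\Az^1$ when $\tau=\operatorname{id}$ and, in general, $\mathcal{L}_\chi\otimes\Az^1$ for the line bundle $\mathcal{L}_\chi$ attached to $\chi$.

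Finally, comparing the two expressions yields $\mathcal{S}^1\otimes_{\Str}(\mathcal{S}^2)^\vee\cong\mathcal{L}_\chi\otimes\Az^1$ as $\Az^1$-modules; applying the inverse Morita equivalence $(\mathcal{S}^1\otimes-)^{-1}$ cancels the factor $\mathcal{S}^1$ and leaves $(\mathcal{S}^2)^\vee\cong\mathcal{L}_\chi\otimes(\mathcal{S}^1)^\vee$, that is $\mathcal{S}^2\cong\mathcal{L}_\chi^{-1}\otimes\mathcal{S}^1$. Since $\hat\Az^i\cong\mathcal{E}nd_{\Str}(\mathcal{S}^i)$ (the splitting bundle being exactly the module that trivializes the Azumaya algebra on $\hat X^{(1)}$) and endomorphism algebras are insensitive to twisting the splitting bundle by a line bundle, I would conclude $\hat\Az^1\cong\hat\Az^2$. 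The main obstacle I expect lies in the middle step: making rigorous that the three functors comprising $G^i$ are genuinely independent of $i$ (so that $G^1(\Az^1)$ and $G^2(\Az^2)$ are canonically identified with the same $\mathcal{R}$), and carefully tracking the twist $\tau$ through $(G^1)^{-1}$ to verify that it contributes only the line-bundle factor $\mathcal{L}_\chi$ rather than altering the underlying $\Str$-module more seriously; once this bookkeeping is under control, the cancellation argument is formal.
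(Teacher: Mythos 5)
Your proposal is correct and takes essentially the same route as the paper's own proof: the paper likewise observes that equivalent Procesi bundles determine isomorphic derived equivalences $\kappa^i: D^b(\operatorname{Coh}(X^{(1)}))\xrightarrow{\sim} D^b(\hat{\W}^{\Gamma_n})$ and evaluates them on the duals of the splitting bundles via $\kappa^j(\mathcal{S}^{j*})=R\Gamma(\mathcal{S}^j\otimes\mathcal{S}^{j*})=R\Gamma(\hat{\Az}^j)=\hat{\W}^{\Gamma_n}$ --- your evaluation of the transport functor $\Theta$ on the free module $\Az^2$ is exactly this cancellation --- concluding $\mathcal{S}^{1*}\cong\mathcal{S}^{2*}$ and hence $\hat{\Az}^1\cong\hat{\Az}^2$. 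One harmless slip: your phrase ``inner, i.e.\ equal to $a_\chi$'' conflates the two cases of Lemma \ref{Lem:wreath_aut} (the $a_\chi$ with $\chi\neq 1$ are the \emph{outer} representatives), and since an equivalence of Procesi bundles has inner induced automorphism you actually have $\tau\cong\operatorname{id}$, so the $\mathcal{L}_\chi$-twist you carry along is trivial --- and would in any case not change the endomorphism algebra, which is all the lemma requires.
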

\begin{proof}
Let $\mathcal{S}^1,\mathcal{S}^2$ be the corresponding splitting bundles. We are going to
prove that $\mathcal{S}^{1*}\cong \mathcal{S}^{2*}$. Namely, we notice that since
$\Pro_{\Az^1}\cong \Pro_{\Az^2}$, the equivalences $\kappa^i: D^b(\operatorname{Coh}(X^{(1)}))\xrightarrow{\sim}
D^b(\hat{\W}^{\Gamma_n})$ defined by $\mathcal{S}^i$ are isomorphic functors. We conclude that
$\kappa^1(\mathcal{S}^{j*})\cong \kappa^2(\mathcal{S}^{j*})$ for any $j=1,2$.
But, by the construction, $\kappa^j(\mathcal{S}^{j*})=R\Gamma(\mathcal{S}^j\otimes\mathcal{S}^{j*})=R\Gamma(\hat{\Az}^j)=
\hat{\W}^{\Gamma_n}$. It follows that $\kappa^1(\mathcal{S}^{2*})\cong \kappa^2(\mathcal{S}^{2*})=\hat{\W}^{\Gamma_n}=\kappa^1(\mathcal{S}^{1*})$. Since $\kappa^1$ is
an equivalence, we conclude that $\mathcal{S}^{1*}\cong \mathcal{S}^{2*}$.
\end{proof}

Also from the above construction of a Procesi bundle we see that the indecomposable components
of its restriction to $\hat{X}^{(1)}$ are (=the restrictions of the indecomposable components)
are the components of a splitting bundle for $\hat{\Az}$.

\subsection{Construction of $\Az$}
We are going to construct the Azumaya algebras $\Az$ by means of Hamiltonian reduction.
We need to produce $2|W|$ different Azumaya algebras (and formally even more, they need
to be non-isomorphic after the restriction to $\hat{X}^{(1)}$). We will produce the required number
of algebras in this subsection and show that they are different in the next one.

Recall that in \cite{quant} we established an isomorphism $e\Halg e\xrightarrow{\sim}\operatorname{D}$
and have produced an action of $W\times \Z/2\Z$ on $e \Halg e\cong \operatorname{D}$
by graded automorphisms that was the identity modulo the parameters. This was done over in arbitrary algebraically closed field
of characteristic $0$, in particular we can do this over $\overline{\mathbb{Q}}$. By the argument in
Subsection \ref{SS_def_Pro}, there is an isomorphism $e \Halg_R e\xrightarrow{\sim} \operatorname{D}_R$ (where $\operatorname{D}_R$
is defined by means of a quantum Hamiltonian reduction over $R$) produced by a Procesi bundle, for
a  sufficiently large algebraic extension $R$ of $\Z$. Extending $R$ further, we may assume that the
$W\times \Z/2\Z$-action is defined over $R$.  So we can assume that the isomorphism $e\Halg e\xrightarrow{\sim}
\operatorname{D}$ holds over $\Fi$ and we have an action of $W\times \Z/2\Z$ on these algebras by automorphisms.
Let us remark that the parameter $\lambda$ corresponding to $\Halg_\lambda=\W\#\Gamma_n$ (i.e. $k=c_i=0$)
has trivial stabilizer in $W\times \Z/2\Z$. This can be either deduced from an explicit description
of the action (the action on $\z$ is as described in Subsection \ref{SS_Ham_res} and $h$ is fixed)
or similarly to the proof of \cite[Proposition 6.4.5]{quant}.
Our conclusion is that we have $2|W|$ different parameters $\lambda$ such that $\operatorname{D}_\lambda\cong \W^{\Gamma_n}$
(that lie in $\Fi_p$ and are still different for $p\gg 0$).
Now let $\Az_\lambda$ denote the quantization of $X$ obtained by Hamiltonian reduction with parameter $\lambda$
(a microlocal sheaf of algebras). Since $\Gamma(X,\Az_\lambda)=\Drm_\lambda$, we are done.

\subsection{$\hat{\Az}_{\lambda'}\not\cong \hat{\Az}_\lambda$ for $\lambda\neq \lambda'$}
We are going to show that when $\lambda,\lambda'$ are different $W\times \Z/2\Z$- conjugate parameters,
then $\hat{\Az}_\lambda\not\cong \hat{\Az}_{\lambda'}$. This will complete the proof that $\Pro_{\Az_{\lambda}}$
and $\Pro_{\Az_{\lambda'}}$ are not equivalent therefore producing $2|W|$ different normalized Procesi bundles over  $\Fi$
(and defined over some finite subfield). This argument was suggested to us by R. Bezrukavnikov.

Recall that $\lambda-\lambda'$ is an element in $\Fi_p\otimes \mathfrak{X}(G)$, where $\mathfrak{X}(G)$
denotes the character lattice of $G$. Let us lift it to an element $\mu\in\mathfrak{X}(G)$.
Then we can produce a $\Az_{\lambda'}$-$\Az_{\lambda}$-bimodule $\Az_{\lambda,\mu}$ that quantizes the line bundle $\Str_\mu$
on $X$ and gives a Morita equivalence between the quantizations $\Az_{\lambda'},\Az_{\lambda}$.
This $\Az_{\lambda,\mu}$ (or, more precisely, $\operatorname{Fr}_* \Az_{\lambda,\mu}$) is a vector bundle on $X^{(1)}$
that is a splitting bundle for $\Az_{\lambda'}\otimes_{\Str_{X^{(1)}}}\Az_{\lambda}^{opp}$. Therefore
$\hat{\Az}_{\lambda,\mu}$ is a splitting bundle for  $\hat{\Az}_{\lambda'}\otimes_{\hat{\Str}_{X^{(1)}}}\hat{\Az}_{\lambda}^{opp}$.
It follows that $\hat{\Az}_{\lambda,\mu}$ is obtained from $\hat{\Az}_{\lambda}$ by a twist by a line bundle, say $\Str_{\mu'}$.
Let us show that this is impossible even on the level of $K_0$.

The space $K^{\K}_0(\operatorname{Coh}(\hat{X}^{(1)}))$ has a filtration by dimension of support.
The multiplication by a vector bundle of rank $r$ preserves this filtration and acts by $r$
on the successive quotients.
In particular, the multiplication by a line bundle is a unipotent operator.
So one can exponentiate the classes of line bundles to any power. Since $\hat{\Az}_{\lambda,\mu}$ is a deformation of $\Str_\mu$,
we see that   the class
$[\operatorname{Fr}_* \hat{\Az}_{\lambda,\mu}]$ equals $[\operatorname{Fr}_* \Str_\mu]$. The latter equals $[\Str_{\mu}]^{1/p}[\operatorname{Fr}_*\Str_{\hat{X}}]$. Since $[\operatorname{Fr}_*\Str_{\hat{X}}]$
is not a zero divisor ($\operatorname{Fr}_*\Str_{\hat{X}}$ is a vector bundle),
we see that $[\Str_{\mu}]^{1/p}=[\Str_{\mu'}]$ meaning that $\mu=p\mu'$. A contradiction
with $\lambda\neq \lambda'$ in $\Fi_p$.

\subsection{Lifting to characteristic $0$}
The last step in the construction of a Procesi bundle  in \cite{BK} was to lift the bundle
constructed in characteristic $p\gg 0$ to characteristic $0$. We only need to make sure that
under this construction non-isomorphic bundles remain non-isomorphic.

Pick a finite subfield $\Fi_q\subset \Fi$ such that all $2|W|$ Procesi bundles are defined
over $\Fi_q$. Further, enlarge $R$ and $q$ so that $\Fi_q$ becomes a residue field for $R$, let $\mathfrak{m}$ denote the corresponding
maximal ideal. Then thanks to the Ext vanishing, we can lift any Procesi bundle $\Pro$ to $R^{\wedge_{\mathfrak{m}}}$.
Then we can localize to the generic point of $R^{\wedge_{\mathfrak{m}}}$. Thanks to \cite[Lemma 6.9]{BK}, the resulting
bundle is defined over a finite extension of $\mathbb{Q}$.

To check that  non-isomorphic bundles $\Pro^1,\Pro^2$ remain non-isomorphic we notice that
that the maps $\nu_{\Pro^1},\nu_{\Pro^2}$ remain different under our transformations.
Indeed, when we lift $\Pro$ from $\Fi$ to $R^{\wedge_{\m}}$, the map $\nu_{\Pro}$ also
lifts from $\Fi$ to $R^{\wedge_{\m}}$. The behavior of this map under the other transformations
is clear.

Our conclusion is that there are $2|W|$ non-isomorphic normalized Procesi bundles over $\K$.
This completes the proof  of Theorem \ref{Thm:main1}.

\section{Properties of Procesi bundles}\label{S_Pro_prop}
The base field in what follows is $\K$. We consider a symplectic resolution $X=X^\theta$ of $X_0=V/\Gamma_n$.
\subsection{Restrictions of Procesi bundles}
We write $\Gamma$ for $\Gamma_n$.
Pick $b\in V$. We want to describe the restriction of a Procesi bundle $\Pro$
to a formal neighborhood $X^{\wedge_b}$ of $\pi^{-1}(\eta(b))$, where recall $\eta:V\rightarrow V/\Gamma$
denotes the quotient morphism. The structure of this formal neighborhood
can be described as follows. First of all, recall that $(V/\Gamma)^{\wedge_{\eta(b)}}$ is naturally
identified with $V^{\wedge_0}/\Gamma_b$. Therefore  $X^{\wedge_b}$
is a symplectic resolution  of $V^{\wedge_0}/\Gamma_b$. In fact, this resolution
can be obtained by Hamiltonian reduction. To $\eta(p)$ there corresponds a point  $x\in\mu^{-1}(0)$
with closed orbit. Consider the symplectic part $N_x=(T_xGx)^{\perp}/T_x Gx$ of the slice $G_x$-module.
The $G_x$-module $N_x$
still corresponds to a double framed quiver, let $\underline{\mu}$ denotes the corresponding
moment map. It is easy to see that $X^{\wedge_b}$ is the formal
neighborhood of the zero fiber in $\underline{X}:=\underline{\mu}^{-1}(0)^{\theta-ss}/G_x$, where we write $\theta$
for the restriction of the original character $\theta:G\rightarrow \K^\times$ to $G_x$.

Our main result in this subsection is as follows. We have a natural map from the space $\param_{\Halg,b}$ constructed
for $\Gamma_b$ to $\param_{\Halg}$ (the element corresponding to a conjugacy class in $\Gamma_b$ is sent
to the element corresponding to the conjugacy class in $\Gamma$ containing the class in $\Gamma_b$).
For example, if $\Gamma_b=\Gamma_{n-m}\times \mathfrak{S}_m$ (with $n-m,m>1$), then the space
$\param_{\Halg,b}$ is spanned by $h,c_1,\ldots,c_r, k,k'$ (where $k'$ corresponds to the reflections in $\mathfrak{S}_m$)
and the map sends $h$ to $h$, $c_i$ to $c_i$ and $k,k'$ to $k$.
Similarly, we have a natural map $\param_{red,b}=\g_x/[\g_x,\g_x]\oplus \K h\rightarrow \param_{red}=\g/[\g,\g]\oplus \K h$
induced by the inclusion $\g_x\hookrightarrow \g$.

\begin{Prop}\label{Prop:Proc_restr}
Under the identification $X^{\wedge_b}\cong \underline{X}^{\wedge_0}$, we have an isomorphism
of $\Pro^{\wedge_b}$ and $\Hom_{\K\Gamma_b}(\K\Gamma,\underline{\Pro}^{\wedge_0})$ of the restrictions
of bundles to formal neighborhoods. Here $\underline{\Pro}$ is a Procesi bundle on $\underline{X}$ that is unique
with the property that the following diagram commutes.
\begin{center}
\begin{picture}(60,30)
\put(2,2){$\param_{res,b}$}\put(40,2){$\param_{red}$}
\put(3,22){$\param_{\Halg,b}$}\put(41,22){$\param_{\Halg}$}
\put(5,21){\vector(0,-1){15}}\put(43,21){\vector(0,-1){15}}
\put(11,3){\vector(1,0){28}}\put(11,23){\vector(1,0){28}}
\end{picture}
\end{center}
\end{Prop}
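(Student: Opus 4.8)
The plan is to reduce the statement to the classification of Procesi bundles on $\underline{X}$ through the parameter map $\nu$, exactly as in Section \ref{S_Pro_SRA}, but carried out in the formal neighborhood of $\pi^{-1}(\eta(b))$. First I would record two structural facts about $\Gamma_b$. Since $b=(b_1,\dots,b_n)\in(\K^2)^{\oplus n}$, its stabilizer in $\Gamma=\mathfrak{S}_n\ltimes\Gamma_1^n$ is a product of wreath products $\mathfrak{S}_m\ltimes(\Gamma_1')^m$ with $\Gamma_1'\subseteq\Gamma_1$ a Kleinian subgroup; hence $\Gamma_b$ is of the symplectically reducible type treated in Subsection \ref{SS_symp_red}, so $\underline{X}$ is a product resolution, it carries a universal deformation $\Halg_b$ over $\param_{\Halg,b}$, and a normalized Procesi bundle on $\underline{X}$ (equivalently, on $\underline{X}^{\wedge_0}$) is determined by its map $\nu$ into $\param_{red,b}$. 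Second, I would check that $\Hom_{\K\Gamma_b}(\K\Gamma,-)$ carries a Procesi bundle $\underline{\Pro}$ for $\Gamma_b$ to a Procesi bundle for $\Gamma$: each fibre becomes $\Hom_{\K\Gamma_b}(\K\Gamma,\K\Gamma_b)=\K\Gamma$, the regular representation of $\Gamma$; the vanishing of higher self-extensions is inherited; and the induced bundle has endomorphism algebra $\operatorname{Mat}_{[\Gamma:\Gamma_b]}(S(V)^{\wedge_0}\#\Gamma_b)$, which the slice identification $V^{\wedge_0}/\Gamma_b\cong(V/\Gamma)^{\wedge_{\eta(b)}}$ turns into $(S(V)\#\Gamma)^{\wedge_{\eta(b)}}$, as required. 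Thus the right-hand side of the claimed isomorphism is, after completion, a Procesi bundle on $X^{\wedge_b}$, and by the local classification it suffices to match $\nu$-maps.

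The technical heart is the compatibility of the two ``slice'' constructions with the parameter maps in the square. On the one hand, I would restrict the quantization $\Dcal$ of Subsection \ref{SS_def_Pro} and the deformed endomorphism algebra $H_{\Pro}=S(\param_{red})\otimes_{S(\param_{\Halg})}\Halg$ to $X^{\wedge_b}$. Using that quantum Hamiltonian reduction commutes with completion at the closed $G$-orbit of $x\in\mu^{-1}(0)$ and reduces, on the symplectic slice $N_x$, to reduction by $G_x$ (the quantum analogue of the slice description of $\underline{X}$ quoted before the Proposition), I would identify $\Dcal^{\wedge_b}$ with $\underline{\Dcal}^{\wedge_0}$ compatibly with the map $\param_{red,b}=\g_x/[\g_x,\g_x]\oplus\K h\rightarrow\param_{red}$ induced by $\g_x\hookrightarrow\g$. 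On the other hand, the completion of $H_{\Pro}$ along $\pi^{-1}(\eta(b))$ is, by the completion (restriction) theorem for symplectic reflection algebras, a matrix algebra over the local symplectic reflection algebra for $\Gamma_b$, with its parameters governed by the natural map $\param_{\Halg,b}\rightarrow\param_{\Halg}$ described before the Proposition (the transpose of restriction of parameter values). Feeding the defining map $\nu_{\Pro}$ through these two identifications produces a map $\nu_{\underline{\Pro}}:\param_{\Halg,b}\rightarrow\param_{red,b}$ for which the square commutes; this single computation simultaneously builds $\underline{\Pro}$ and verifies the diagram.

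It remains to assemble and to settle uniqueness. The previous paragraph exhibits $\Pro^{\wedge_b}$ as the completion of $\Hom_{\K\Gamma_b}(\K\Gamma,\underline{\Pro}^{\wedge_0})$ for the normalized Procesi bundle $\underline{\Pro}$ on $\underline{X}$ whose deformation over $\underline{\Dcal}$ realizes $\nu_{\underline{\Pro}}$; the generalization of Proposition \ref{Prop:Pro_coinc} to the symplectically reducible case then upgrades the matching of $\nu$-maps to the asserted isomorphism of bundles, which by construction respects the fibrewise $\Gamma$-actions. For uniqueness of $\underline{\Pro}$ given only the commuting square, I would invoke the generalization of Lemma \ref{Lem:param_map}: the admissible maps $\nu_{\underline{\Pro}}$ form a single orbit of $W_b\times\Z/2\Z$ (so there are exactly $2|W_b|$ normalized Procesi bundles on $\underline{X}$), and the square, forcing $\nu_{\underline{\Pro}}$ to intertwine $\nu_{\Pro}$ along $\param_{\Halg,b}\to\param_{\Halg}$ and $\param_{red,b}\to\param_{red}$, is satisfied by exactly one member of that finite orbit. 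I expect the main obstacle to be precisely the double compatibility in the middle paragraph: proving that the quantum slice identification of $\Dcal^{\wedge_b}$ with $\underline{\Dcal}$ and the symplectic-reflection-algebra completion theorem are governed by parameter maps that match, so that the one map $\nu_{\underline{\Pro}}$ controls both sides and makes the square commute.
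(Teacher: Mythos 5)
Your plan follows the same route as the paper's proof: the completion $(SV\#\Gamma)^{\wedge_b}$ of $\End(\Pro)$ is identified with the centralizer algebra $Z(\Gamma,\Gamma_b,SV^{\wedge_0}\#\Gamma_b)=\End\bigl(\Hom_{\Gamma_b}(\Gamma,SV^{\wedge_0}\#\Gamma_b)\bigr)$ via \cite[2.3]{sraco}, which factors $\Pro^{\wedge_b}$ as $\Hom_{\Gamma_b}(\Gamma,\Pro')$; the quantizations are matched by $\Dcal^{\wedge_b}\cong\underline{\Dcal}^{\wedge_0}$; and uniqueness is settled by the symplectically reducible versions of Lemma \ref{Lem:param_map} and Proposition \ref{Prop:Pro_coinc} from Subsection \ref{SS_symp_red}, inspecting which member of the finite orbit of admissible $\nu$'s makes the square commute. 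One small slip: for $\Gamma_b$ a product of wreath products the relevant group is the \emph{product} of the groups $W_i\times\Z/2\Z$ attached to the individual factors, not a single $W_b\times\Z/2\Z$, so the count is $\prod_i 2|W_i|$ rather than $2|W_b|$; this does not damage the logic. Also note that the paper constructs $\underline{\Pro}$ directly from $\Pro$ (equipping $\Pro'$ with a $\K^\times$-equivariant structure and extending it from $\underline{X}^{\wedge_0}$ to $\underline{X}$), whereas you induce from a given $\underline{\Pro}$ and match by classification; these are equivalent in substance, but your version still needs the extension step you only gesture at with ``(equivalently, on $\underline{X}^{\wedge_0}$)''.

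The genuine gap sits exactly where you say you ``expect the main obstacle'': the double compatibility in your middle paragraph is asserted, not proved, and it does not follow formally from the two slice identifications. The issue is that you obtain two a priori unrelated isomorphisms $\Halg^{\wedge_b}\xrightarrow{\sim} Z(\Gamma,\Gamma_b,\underline{\Halg}^{\wedge_0})$: one induced by the bundle isomorphism $\Pro^{\wedge_b}\cong\Hom_{\Gamma_b}(\Gamma,\underline{\Pro}^{\wedge_0})$ together with $\Dcal^{\wedge_b}\cong\underline{\Dcal}^{\wedge_0}$, and the $S(\param_{\Halg})$-linear one furnished by the completion theorem of \cite{sraco}. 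A priori the first could twist the parameters by a nontrivial automorphism, in which case ``feeding $\nu_{\Pro}$ through the identifications'' would not yield a commuting square. The paper closes this by composing the two isomorphisms to get an automorphism of $Z(\Gamma,\Gamma_b,\underline{\Halg}^{\wedge_0})$ that is the identity on the quotient $Z(\Gamma,\Gamma_b,SV^{\wedge_0}\#\Gamma_b)$; since the eigenvalues of the Euler vector field on the kernel of $Z(\Gamma,\Gamma_b,\underline{\Halg}^{\wedge_0})\twoheadrightarrow Z(\Gamma,\Gamma_b,SV^{\wedge_0}\#\Gamma_b)$ are positive, this automorphism can be corrected by an inner one to become $\K^\times$-equivariant; a $\K^\times$-equivariant such automorphism preserves $\param_{\Halg}$, and the unobstructedness of graded deformations of $SV\#\Gamma_b$ then forces it to act as the identity on the image of $\param_{\Halg,b}$. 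Some rigidity argument of this kind is indispensable; with it supplied, the rest of your outline (induction preserves the Procesi properties, uniqueness by inspecting the orbit of admissible maps) is sound and agrees with the paper.
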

\begin{proof}
The endomorphism algebra of $\Pro^{\wedge_b}$ is the completion $(SV\#\Gamma)^{\wedge_b}$ of $SV\#\Gamma$
at $\eta(b)$. This algebra is naturally isomorphic to the centralizer algebra $Z(\Gamma,\Gamma_b, SV^{\wedge_0}\#\Gamma_b)$,
see \cite[2.3]{sraco}. Here $Z(\Gamma,\Gamma_b, SV^{\wedge_0}\#\Gamma_b)$ is the endomorphism of the right
$SV^{\wedge_0}\#\Gamma_b$-module $\Hom_{\Gamma_b}(\Gamma, SV^{\wedge_0}\#\Gamma_b)$, see loc. cit. for details.
It follows that $\Pro^{\wedge_b}\cong \Hom_{\Gamma_b}(\Gamma, \Pro')$ for a bundle $\Pro'$ on $\underline{X}^{\wedge_0}$.
By the construction, we have $\End(\Pro')\xrightarrow{\sim} SV^{\wedge_0}\#\Gamma_b, \Ext^i(\Pro',\Pro')=0$
for $i>0$ and $\Pro'^{\Gamma_b}=\Str_{\underline{X}^{\wedge_0}}$. As above (Subsection \ref{SS_Az_to_Pro}),
we equip $\Pro'$ with a $\K^\times$-equivariant
structure and extend it to $\underline{X}$, getting a Procesi bundle, $\underline{\Pro}$.
It remains to check that the resulting map $\nu_{\underline{\Pro}}$ makes the diagram above commutative and that
$\underline{\Pro}$ is unique with this property.

Let us prove the first statement. We have an isomorphism $\Dcal^{\wedge_b}\cong \underline{\Dcal}^{\wedge_0}$, where $\underline{\Dcal}$
is an analog of $\Dcal$ for $\underline{X}$.
This can be established by analogy with \cite[Lemma 6.5.2]{quant} or from the observations that both sides
are canonical quantizations of $X^{\wedge_b}$ in the sense of Bezrukavnikov and Kaledin, \cite{BK1},
see also \cite[Section 5]{quant}. So the isomorphism $\Pro^{\wedge_b}\cong
\Hom_{\Gamma_b}(\Gamma,\underline{\Pro}^{\wedge_0})$ gives rise to an isomorphism
$\Halg^{\wedge_b}\xrightarrow{\sim} Z(\Gamma,\Gamma_b, \underline{\Halg}^{\wedge_0})$,
where the algebra $\underline{\Halg}^{\wedge_0}$ is defined over $S(\param_{\Halg})$. By the construction, this isomorphism
lifts the isomorphism $(SV\#\Gamma)^{\wedge_b}\xrightarrow{\sim} Z(\Gamma,\Gamma_b, SV^{\wedge_0}\#\Gamma_b)$.
There is such an isomorphism $\Halg^{\wedge_b}\xrightarrow{\sim} Z(\Gamma,\Gamma_b, \underline{\Halg}^{\wedge_0})$
that is $S(\param_{\Halg})$-linear, see \cite{sraco}. From the two isomorphisms,
we get an automorphism of $Z(\Gamma,\Gamma_b, \underline{\Halg}^{\wedge_0})$ that  is the identity on the quotient
$Z(\Gamma,\Gamma_b, SV^{\wedge_0}\#\Gamma_b)$. We can compose this automorphism with an inner automorphism to make it
$\K^\times$-equivariant because the eigenvalues of the Euler vector field
on the kernel of $Z(\Gamma,\Gamma_b, \underline{\Halg}^{\wedge_0})\twoheadrightarrow Z(\Gamma,\Gamma_b, SV^{\wedge_0}\#\Gamma_b)$
are all positive. It follows that this isomorphism preserves $\param_{\Halg}$.
Since the graded deformations of $SV\#\Gamma_b$ are unobstructed, the automorphism of $\param_{\Halg}$ has to
be the identity on the image of $\param_{\Halg,b}$. This proves our compatibility claim.

All possible maps $\nu_{\underline{\Pro}}$ are constructed as was explained in Subsection \ref{SS_symp_red}.
Analyzing the list, we easily see that only one map makes the diagram in the statement of the proposition
commutative. As we have seen in Subsection \ref{SS_symp_red}, the bundle $\underline{\Pro}$ is uniquely
recovered from $\nu_{\underline{\Pro}}$.
\end{proof}

\subsection{$\Gamma_{n-1}$-invariants}
We are going to prove that the subbundle of the $\Gamma_{n-1}$-invariants in a suitable Procesi bundle on $X^\theta$ 
coincides with the tautological bundle $\mathcal{T}$.  Namely, we will take the bundle $\Pro$ such that
$\nu_\Pro$ is the inverse of the map given by (\ref{eq:map}).

For $\theta$, we can have either $\theta\cdot \delta=\sum_{i\in Q_0}\theta_i\delta_i>0$ or $\theta\cdot\delta<0$. We consider the case when
$\theta\cdot\delta>0$ first. As was shown in \cite{GG}, the variety $\mu^{-1}(0)=\{(A,B,i,j)| [A,B]+ij=0\}$
has $n+1$ irreducible components. Let $\Lambda_+$ be the irreducible component, where $j=0$,
and $\Lambda_-$ be the irreducible component, where $i=0$.

\begin{Lem}
Let $\theta\cdot\delta>0$. Then $\mu^{-1}(0)^{ss}\subset \Lambda_+$.
\end{Lem}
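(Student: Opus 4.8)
The plan is to analyze the stability condition directly on the Nakajima quiver variety for the Jordan quiver. Recall that $\mu^{-1}(0)=\{(A,B,i,j)\mid [A,B]+ij=0\}$, where $A,B\in\End(\K^n)$, $i\in\Hom(\K,\K^n)$ and $j\in\Hom(\K^n,\K)$, and $G=\GL_n$ acts by conjugation on $A,B$ and in the natural way on $i,j$. Since $\theta=\theta_0\det$ with $\theta\cdot\delta=\theta_0>0$ (for the Jordan quiver $\delta$ is just $1$ at the single vertex), the semistability condition in the GIT sense is the standard King stability: a point is $\theta$-semistable precisely when there is no proper $(A,B)$-invariant subspace containing the image of $i$ on which $\theta$ takes the wrong sign. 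First I would recall the explicit cohomology/stability criterion from \cite{GG} (or from Nakajima's work) stating that for $\theta_0>0$ a representation $(A,B,i,j)$ is $\theta$-semistable if and only if there is no proper nonzero $(A,B)$-invariant subspace of $\K^n$ containing $\operatorname{im}(i)$; equivalently, $\K^n$ is generated by $i$ under the action of $A$ and $B$.

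The key step is then to show that semistability forces $j=0$, which places the point in $\Lambda_+$. Suppose $(A,B,i,j)$ is $\theta$-semistable. Consider the subspace $S:=\ker(j)\cap\{(A,B)\text{-invariant subspaces}\}$, or more precisely the largest $(A,B)$-invariant subspace contained in $\ker j$. I would argue that $\operatorname{im}(i)\subset S$: from the moment map equation $[A,B]+ij=0$, one computes that $j$ vanishes on the span of $\operatorname{im}(i)$ under iterated applications of $A$ and $B$. The clean way to see this is the following trace argument: pairing the relation $[A,B]=-ij$ and evaluating $j$ on the cyclic span $W$ of $\operatorname{im} i$ under $A,B$, one shows $jW=0$. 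Concretely, $W$ is $(A,B)$-invariant and contains $\operatorname{im} i$, so by the semistability criterion $W=\K^n$; but then I must show $j|_W=0$, which follows because $ji=\tr(ij)=-\tr([A,B])=0$ and inductively $j(A^aB^b\cdots i)=0$ using that $ij=[B,A]$ is a commutator together with the invariance of $W$. This is the nilpotency-type computation underlying the result, and it is exactly where the moment-map equation is used.

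The main obstacle I expect is making the inductive vanishing $j|_W=0$ precise: one must carefully propagate the vanishing of $j$ from $\operatorname{im}(i)$ across all words in $A$ and $B$, using the single relation $ij=[B,A]$ repeatedly, and ensure that no boundary term spoils the induction. The cleanest formulation is to set $W_k$ to be the span of all vectors of the form $w_1\cdots w_k\,i(1)$ with each $w_\ell\in\{A,B\}$, prove by induction on $k$ that $j(W_k)=0$ using $j\circ(AB-BA)=j\circ ij=(ji)\,j$ together with the already-established $ji=0$, and then conclude $j(W)=0$ where $W=\sum_k W_k$. Since $W$ is the cyclic span and equals $\K^n$ by semistability, this gives $j=0$, i.e. the point lies in $\Lambda_+$. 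I would then remark that the companion statement $\mu^{-1}(0)^{ss}\subset\Lambda_-$ for $\theta\cdot\delta<0$ follows by the symmetry exchanging $(A,B,i,j)\mapsto(B,A,j^{*},i^{*})$ and reversing the sign of $\theta$, so only the $\theta\cdot\delta>0$ case needs the full argument.
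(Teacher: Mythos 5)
Your proposal diverges from the paper's proof in a way that matters, and it has a genuine gap of scope. The lemma, as stated and used in the paper, concerns an arbitrary affine Dynkin quiver $Q$ with dimension vector $n\delta$ and framing $\epsilon_0$: the notation $(A,B,i,j)$ with $[A,B]+ij=0$ is shorthand in which $A$, $B$ are the collections of maps along the arrows and the reversed arrows, the moment map equation has a component at every vertex, and $\theta\cdot\delta=\sum_{i\in Q_0}\theta_i\delta_i$ (note that the surrounding argument proves Theorem \ref{Thm:main2} for general $\Gamma_1$). You specialize at the outset to the Jordan quiver, and your key first step --- ``$\theta$-semistable iff $\K^n$ is generated by $\operatorname{im}(i)$ under $A,B$'' --- is correct only there, or, for general $Q$, only in one chamber. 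The hypothesis $\theta\cdot\delta>0$ is satisfied by many chambers, cut out by the hyperplanes $\theta\cdot\alpha=0$ for positive roots $\alpha<n\delta$; if cyclicity were equivalent to semistability throughout the half-space $\theta\cdot\delta>0$, the semistable loci, hence the resolutions $X^\theta$, would coincide for all these chambers, contradicting the chamber structure recalled in Subsection \ref{SS_Ham_res}. So for general $Q$ your argument fails at the first step. The paper's proof is designed precisely to be chamber-independent: it is local, taking the Luna slice at a generic closed orbit $r=(A,B,0,0)$ (a sum of $n$ pairwise non-isomorphic $\delta$-dimensional representations), whose stabilizer is the torus $(\K^\times)^n$; the restriction of $\theta$ to each factor is the single number $\theta\cdot\delta$, so only its sign is visible in the slice, where the induced stability together with the slice moment map equations $i_kj_k=0$ forces $j=0$. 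Since $\mu^{-1}(0)^{ss}$ is irreducible and the components of $\mu^{-1}(0)$ are equidimensional, exhibiting semistable points with $j=0$ near $Gr$ identifies the component as $\Lambda_+$.

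Even in the Jordan-quiver case, where your route is the classical lemma of Nakajima (also in \cite{GG}), the inductive step is incomplete as written. The identity $j\circ(AB-BA)=-(ji)j$ together with induction on word length only shows that $jwi$ is unchanged when adjacent letters of $w$ are transposed, modulo products $(jpi)(jqi)$ of strictly shorter terms; this reduces every $jwi$ to a sorted value $c_{a,b}:=jA^aB^bi$ but does not kill it. Your justification that $c_{a,b}=0$ ``because $ij=[B,A]$ is a commutator'' is insufficient: $\tr(M[B,A])$ need not vanish for an arbitrary $M$. The standard finish is an extra trace computation: $c_{a,b}=\tr(A^aB^b\,ij)=-\tr\bigl(A^aB^b[A,B]\bigr)$, and moving $A$ through $B^b$ via $[B,A]=ij$ while using cyclicity of the trace yields $c_{a,b}=-b\,c_{a,b}$, whence $c_{a,b}=0$ in characteristic zero. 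With that step supplied, your argument is a correct, pointwise and elementary proof for $\Gamma_1=\{1\}$ (it needs no slice theory and no irreducibility of the semistable locus), but it does not prove the lemma in the generality in which the paper states and uses it.
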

\begin{proof}
Consider a point $r=\{(A,B,0,0)\}$ with generic commuting $A,B$. The corresponding representation of
the quiver $Q$ is semi-simple, in other words, $Gr$ is closed. The group $G_r$ is the center of $\GL(\delta)^{\times n}$,
i.e., the $n$-dimensional torus naturally identified with $(\K^\times)^n$. The restriction of $\theta$ to
all copies of $\K^\times$ is $\theta\cdot\delta$. The slice module for $r$ is the sum
of a trivial $G_r$-module and $\K^n\oplus \K^{n*}=\{(i,j)\}$. Since $\theta\cdot\delta>0$, the stable points
with respect to the induced stability condition are those, where all coordinates of $i$ are nonzero,
and, correspondingly, $j=0$. This proves our claim.
\end{proof}

Let $\alpha\in \z^*$ and the one-parameter deformation  $\tilde{X}$ of $X$ be as in the proof of Proposition \ref{Prop:Pro_coinc}.
Let $\tilde{\mathcal{T}}$ denote the tautological bundle on the resolution.
Now consider the variety $\tilde{X}^{reg}:=\tilde{X}\setminus (X\setminus X^{reg})$
and the bundle $\tilde{\Pro}^{\Gamma_{n-1}}$ on this variety.
The elements $x_n,y_n\in \Halg$ act on $\tilde{\Pro}$ fiberwise. Let us choose an identification in $\tilde{\Pro}^{\Gamma_{n-1}}_x
\cong (\K\Gamma_n)^{\Gamma_{n-1}}$ of modules over the centralizer of $\Gamma_{n-1}$ in $\K\Gamma_n$.
This presents $x_n,y_n$ as operators $A,B$ on $(\K\Gamma_n)^{\Gamma_{n-1}}$. For $i$ take the image
of $1$ in $\K\Gamma_n$. Then, as Etingof and Ginzburg essentially checked in \cite[Lemma 11.15]{EG},
for $x$ lying over a nonzero point $z\in\mathbb{A}^1$ there is a unique element $j\in \K^{n*}$
such that $(A,B,i,j)\in \mu^{-1}(z)$ (here we use the form of $\nu_\Pro$). If $x\in X^{reg}$, then it is easy
to see that $\K[x_n,y_n]1=(\K\Gamma_n)^{\Gamma_{n-1}}$, so in this case we have $(A,B,i,0)\in \mu^{-1}(0)^{\theta-ss}$.
In particular, we have a morphism $\iota:\tilde{X}^{reg}\rightarrow \mu^{-1}(\K \alpha)^{-1}/G=\tilde{X}$ given by
$x\mapsto G(A,B,i,j)$.

\begin{Lem}
The morphism $\iota$ is the usual inclusion  $\tilde{X}^{reg}\subset \tilde{X}$.
\end{Lem}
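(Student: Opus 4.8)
The plan is to show that $\iota$ coincides with the inclusion by checking the two on a dense open subset. Both $\iota$ and the inclusion $\tilde{X}^{reg}\hookrightarrow\tilde{X}$ are morphisms of varieties, and $\tilde{X}$ is separated while $\tilde{X}^{reg}$ is reduced and irreducible (being an open subset of the smooth connected variety $\tilde{X}$). Hence the locus in $\tilde{X}^{reg}$ where the two morphisms agree is closed, and it is enough to prove $\iota(x)=x$ for $x$ in some dense open subset.

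For the dense open subset I would take the locus $\tilde{X}^{reg}_{\neq 0}$ of points lying over the nonzero points of $\mathbb{A}^1$ at which the fiber of $\tilde{\Pro}$ is the regular representation of $\Gamma_n$. Since the central fiber is a divisor and the complementary (non-free) locus is a proper closed subset of every fiber, this locus is dense in $\tilde{X}^{reg}$. Over a nonzero $z$ the fiber $\tilde{X}_z$ is affine and is identified with $\Spec$ of a specialization of the spherical subalgebra $e\Halg e\cong\Drm$. On this locus $\iota$ is exactly the reconstruction of Etingof and Ginzburg: the fiber of $\tilde{\Pro}^{\Gamma_{n-1}}$ is $(\K\Gamma_n)^{\Gamma_{n-1}}$, the operators $A=x_n$, $B=y_n$ and the cyclic vector $i=$ (image of $1$) are read off from this fiber, and $j$ is the unique element making $(A,B,i,j)$ lie in the appropriate moment-map fiber, as in \cite[Lemma 11.15]{EG}.

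The point is that this is the \emph{same} quiver description used to build $\tilde{X}_z$ in the first place. The choice $\nu_{\Pro}=$ (inverse of (\ref{eq:map})) guarantees that the parameter of the symplectic reflection algebra attached to $\tilde{\Pro}$ matches the moment-map level $z\alpha$ defining $\tilde{X}_z$ as a Hamiltonian reduction; therefore the Etingof--Ginzburg identification realized by $\iota$ sends a point of $\tilde{X}_z$ to the $G$-orbit that represents it. In other words $\iota$ is the tautological identification of $\tilde{X}_z$ with itself, i.e. the inclusion, on the dense open subset. By the reduction of the first paragraph, $\iota$ is then the inclusion on all of $\tilde{X}^{reg}$.

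The main obstacle is the matching in the second and third paragraphs: one must verify that the operators produced from the fiberwise action of $x_n,y_n$ on $\tilde{\Pro}^{\Gamma_{n-1}}$ recover the original quiver representation up to the $G$-action, and on the correct moment-map fiber, rather than a twist or a representation at a shifted parameter. On free orbits all stabilizers are trivial, so the reconstruction of $(A,B,i)$ and the uniqueness of $j$ are a transparent linear-algebra computation in the style of \cite{EG}; the genuinely delicate point is the bookkeeping that ties the symplectic-reflection parameter, through $\nu_{\Pro}$ and (\ref{eq:map}), to the moment-map value $z\alpha$, which is what forces $\iota$ to be the inclusion and not merely some automorphism of $\tilde{X}^{reg}$ over $\tilde{X}_0$.
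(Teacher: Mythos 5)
Your first-paragraph reduction (agreement on a dense open subset of a reduced scheme mapping to a separated one) is fine, but the verification on that dense subset is the entire content of the lemma, and your resolution of it is circular. Matching the parameters through $\nu_{\Pro}$ and (\ref{eq:map}) only guarantees that $\iota$ carries the fiber over $z\neq 0$ into the \emph{correct} moment-map fiber $\mu^{-1}(z\alpha)^{\theta-ss}/G$; it says nothing about which map of that fiber it induces. Note also that the ambiguity is not "an automorphism of $\tilde{X}^{reg}$ over $\tilde{X}_0$" --- if $\iota$ were known to commute with $\tilde{\pi}$ the lemma would be immediate on $\tilde{X}^{reg}$, where $\tilde{\pi}$ is an isomorphism; the genuine residual ambiguity is an automorphism of the affinized family $\tilde{X}_0$ over $\mathbb{A}^1$. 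Your claim that the Etingof--Ginzburg reconstruction is "the tautological identification" on generic fibers cannot be checked directly: the identification of $\tilde{X}_z$ with $\Spec$ of the specialized spherical algebra passes through the isomorphism $e\Halg e\cong \Drm$, which in \cite{quant} is produced by abstract deformation-theoretic uniqueness, with no explicit formulas. So there is no way to verify directly that, say, the function $x\mapsto \operatorname{tr}(A_x^m)$ on $\tilde{X}_z$ coincides with the image of the corresponding invariant function in $\K[\mu^{-1}(z\alpha)]^G$ under that isomorphism --- and this is exactly what "the same quiver description" would require.

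The paper argues in the opposite direction: the only fiber where the identification \emph{is} tautological is the zero fiber, since $\iota|_{X^{reg}}$ (with $j=0$ there) reproduces the very construction of the identification $X_0=V/\Gamma_n\cong\mu^{-1}(0)\quo G$. The idea missing from your proposal is the rigidity step that propagates this to $z\neq 0$: the induced $\K^\times$-equivariant morphism $\tilde{X}_0\rightarrow\tilde{X}_0$ over $\mathbb{A}^1$, being the identity on the zero fiber, must have the form $\exp(z^k\xi)$ with $\xi$ a vector field on $\tilde{X}_0$ of degree $-2k$, tangent to the fibers and nonvanishing on $X_0$; since any vector field on $X_0$ lifts to $V$ and $V$ carries no nonzero vector fields of degree $\leqslant -2$, one gets $\xi=0$. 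Your proof has no analogue of this argument, and without it nothing excludes precisely the fiberwise automorphism you flag as "the genuinely delicate point" in your final paragraph.
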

\begin{proof}
Indeed, $\iota|_{X^{reg}}$ is the inclusion,
this is how the identification of $X_0=V/\Gamma_n$ with $\mu^{-1}(0)\quo G$ is constructed. Now our
morphism $\tilde{X}^{ss}\rightarrow \tilde{X}$ gives rise to a $\K^\times$-equivariant morphism
$\tilde{X}_0\rightarrow \tilde{X}_0$ of schemes over $\K$ that is the identity in the zero fiber. So it is given
by $\exp(z^k \xi)$, where $\xi$ is a vector field of degree $-2k$ on $\tilde{X}_0$ (that is tangent to
the fibers of $\tilde{X}_0\twoheadrightarrow \K$) and does not vanish on $X_0$. However any vector field
on $X_0$ lifts to a vector field on $V$ and there are no nonzero vector fields of degree $\leqslant -2$ on $V$.
So $\xi=0$ and we are done. 
\end{proof}

By the construction of the morphism $\iota$, we get an identification of $\tilde{\Pro}^{\Gamma_{n-1}}$
with $\iota^*(\tilde{\mathcal{T}})$ on $\tilde{X}^{reg}$. But still the codimension of $\tilde{X}\setminus \tilde{X}^{reg}$
is $2$, and so this isomorphism extends to the whole variety $\tilde{X}$.

Now consider the case when $\theta\cdot\delta<0$. Then we have $\mu^{-1}(0)^{ss}\subset \Lambda_-$.
The $\Gamma_n$-module $\K\Gamma_n$ is self-dual and so we can also identify $\tilde{\Pro}_x^{\Gamma_{n-1}}$
with $(\K\Gamma_n)^{\Gamma_{n-1}*}$. Then we construct a morphism $\iota:\tilde{X}^{reg}\rightarrow \mu^{-1}(\K \alpha)^{-1}/G$
in the same way but now we are taking $1\in \tilde{\Pro}_x^{\Gamma_{n-1}}$ for $j$. For the same reasons
as before, $\iota$ is just the inclusion, and $\tilde{\Pro}^{\Gamma_{n-1}}=\iota^*(\tilde{\mathcal{T}})$.

\begin{Rem}
One can ask to describe $\Pro'^{\Gamma_{n-1}}$ for a Procesi bundle $\Pro'$ with different $\nu_{\Pro'}$.
Recall, see \cite{Maffei}, that, for $w\in W$, there is an isomorphism, say $\sigma_w:X^\theta\xrightarrow{\sim} X^{w\theta}$
of schemes over $X_0$. By the construction in \cite[6.4]{quant} (in particular, see the discussion after Lemma 6.4.3 in loc.cit.),
one has $\nu_{\sigma_{w*}\Pro}=w\nu_{\Pro}$. So $(\sigma_{w*}\Pro)^{\Gamma_{n-1}}=\sigma_{w*}\mathcal{T}$.

Now consider the bundle $\Pro':=\Pro^*$ that is also a Procesi bundle (we use a natural identification
of $S(V)\#\Gamma_n$ with its opposite, that is the identity on $V$ and
is the inversion on $\Gamma_n$). We claim that $\nu_{\Pro'}=w_0\sigma \nu_{\Pro}$,
where $w_0$ is the longest element in $W$, and $\sigma$ is a generator of $\ZZ/2\ZZ$. Indeed, we have an anti-automorphism
$\psi$ of $\Halg$ that is the identity on $V$, the inversion on $\Gamma_n$ and maps $h$ to $-h$,  $c(s)$ to $-c(s^{-1})$
(where $c(s)$ is a basis element in $\param$ corresponding to a symplectic reflection $s$) and so coincides with
$w_0\sigma$ on $\z$. The right module $\tilde{\Pro}'_h$ is obtained from the left module $(\tilde{\Pro}_h)^*$ by using the parity-antiautomorphism, see \cite[Section 2.2]{quant}, $\Dcal\xrightarrow{\sim}\Dcal^{opp}$. 
So the endomorphism algebra of $\tilde{\Pro}'_h$
is naturally identified with the opposite of $\End_{\Dcal^{opp}}(\tilde{\Pro}_h)$. Since $\Halg$ has no automorphisms
that are graded, preserve the parameter space and are the identity modulo the parameters, we see that
the identification $H_{\Pro'}\cong \Halg$ is obtained from
$H_{\Pro}\cong \Halg$ by applying the anti-automorphism of $\Halg$. This implies our claim,
because the antiautomorphism acts on $\param_{\Halg}$ as $w_0\sigma$.

Of course, if $\Pro^{\Gamma_{n-1}}=\mathcal{T}$, then $(\Pro^*)^{\Gamma_{n-1}}=\mathcal{T}^*$. Now we can characterize
$\Gamma_{n-1}$-invariants in all Procesi bundles on $X$.
\end{Rem}



\end{document}